\documentclass[12pt, a4paper, reqno]{amsart}
\usepackage{eucal}
\usepackage{amsmath}%
\usepackage{amsfonts}%
\usepackage{amssymb}%
\usepackage{graphicx}

\newcommand{\N}{\mathbb N}
\newcommand{\C}{\mathbb C}
\newcommand{\R}{\mathbb R}
\newcommand{\I}{\mathcal I}
\newcommand{\an}[1]{a_{#1}}
\newcommand{\V}[1]{b_{#1}}
\newcommand{\ap}[1]{a^{\textsc{per}}_{#1}}
\newcommand{\Vp}[1]{b^{\textsc{per}}_{#1}}
\newcommand{\p}{^{\textsc{per}}}
\newcommand{\inv}{^{-1}}
\newcommand{\vect}[2]
 {\left(\begin{array}{@{}c@{}}#1\\
                             #2\end{array}\right)}
\newcommand{\matr}[4]
 {\left(\begin{array}{@{}cc@{}}
 #1&#2\\
 #3&#4
 \end{array}\right)}
\newcommand{\df}\equiv
\newcommand{\J}{\mathbf J}
\newcommand{\mT}{T}
\newcommand{\mU}{U}
\newcommand{\mL}{\Lambda}
\newcommand{\mI}{ {\mathbb I}}
\newcommand{\mW}{W}
\newcommand{\mP}{\Phi}
\newcommand{\set}[1]{\left\{#1\right\}}
\newcommand{\sett}[2]{\left\{#1\big|#2\right\}}
\newcommand{\tr}{\mathrm{Trace}\,}
\newcommand{\ac}{\mathrm{ac}}
\newcommand{\im}{\mathrm{Im}\,}
\newcommand{\re}{\mathrm{Re}\,}

\newcommand{\spr}[2]{\left\langle\,#1,#2\,\right\rangle}
\newcommand{\choice}[4]{\left\{\begin{array}{rl}#1,\; & #2\\#3, &#4\end{array}\right.}

\newcommand{\bo}[1]{\mathbf{#1}}

\newtheorem{theorem}{Theorem}
\newtheorem*{theorem*}{Theorem}
\theoremstyle{plain}

\newtheorem{lemma}{Lemma}
\newtheorem{prop}{Proposition}
\numberwithin{equation}{section}

\newcommand{\spec}[1]{\sigma(#1)}

\newcommand{\ddo}{\Delta(\zeta)}

\newcommand{\oo}{$\mathrm{(}$}
\newcommand{\cc}{$\mathrm{)}\,$}
\begin{document}
\title[Preservation of a.c.\ spectrum]{
Preservation of absolutely continuous spectrum of periodic Jacobi
operators under perturbations of square--summable variation}

\author{U. Kaluzhny}
\author{M. Shamis}
\address{Institute of Mathematics,\newline \indent The Hebrew University, \newline
\indent 91904 Jerusalem, Israel.}%
\email{kaluzhny@math.huji.ac.il, shamis@math.huji.ac.il}%

\begin{abstract}
We study self-adjoint bounded Jacobi operators of the form:
\[
(\J\psi)(n) = \an{n}\psi(n + 1) + \V{n}\psi(n) + \an{n-1}\psi(n - 1)
\]
on $\ell^2(\N)$.
We assume that
 for some fixed $q\in\N$, the $q$-variation of $\{a_n\}$ and $\{b_n\}$ is square-summable and
  $\{a_n\}$ and $\{b_n\}$ converge to $q$-periodic sequences $\{a_n\p\}$ and $\{b_n\p\}$, respectively.

Our main result is that under these assumptions  the essential
support of the absolutely continuous part of the spectrum of $\J$ is
equal to that of the asymptotic periodic Jacobi operator.

This work generalizes a recent result of S.~A.~Denisov.
\end{abstract}
\maketitle


\section{Introduction}
\label{sec:Introduction}
In this work we consider bounded self-adjoint Jacobi operators on $\ell^2(\N)$ with a Dirichlet
boundary condition, i.e., discrete one-dimensional operators  defined by:
\begin{equation}\label{op}
\begin{array}{lcl}
(\J \psi)(n) &=& \an{n}\psi(n + 1) + \V{n}\psi(n) + \an{n-1}\psi(n - 1),\;n>1 \\
(\J \psi)(1) &=& \an{1}\psi(2) + \V{1}\psi(1),
\end{array}
\end{equation}
where $\bo a = \{\an{n}\}$ and $\bo b = \{\V{n}\}$ are bounded sequences of real numbers, $\inf \an{n}> 0$, and for some fixed $q\in\N$ the sequences $\bo a$ and $\bo b$ are of square-summable $q$-variation, namely:
\begin{equation}\label{s2}
\sum_{n=1}^\infty |\an{n + q} - \an{n}|^2 + |\V{n + q} - \V{n}|^2 < \infty.
\end{equation}
We also assume that the limits
\begin{equation}\label{per}
\ap{k}\df\lim_{n\rightarrow\infty}\an{k + nq}, \;
 \Vp{k}\df\lim_{n\rightarrow\infty}\V{k + nq}
\end{equation}
exist for all $k\in\N$.

From (\ref{s2}) and (\ref{per}) one can see that $\{a\p_n\}$ and $\{b\p_n\}$ are periodic sequences with period $q$, i.e.,
\[
 \ap{k} = \ap{k+q},\;\;  \Vp{k} = \Vp{k+q},
\]
for all $k$.
Denote by  $\J \p$ the periodic Jacobi operator defined by
$\{a\p_n\}$ and $\{b\p_n\}$.
Obviously, $\ap{n}\neq0$ for all $n$.

For Jacobi operators $\J$ (with $\inf_n a_n > 0$), we define $\Sigma_\mathrm{ac}(\J)$, the essential
support of the absolutely continuous spectrum of $\J$, as the equivalence class, up to sets of zero Lebesgue measure, of the set
\[ \left\{ E \in \mathbb{R} \, \big| \,
    \lim_{\epsilon \searrow 0} \im \langle \delta_1, \, (\J - E - i\epsilon)^{-1} \delta_1 \rangle
    \quad \text{exists and differs from 0.} \right\}. \]
Here $\langle \cdot, \, \cdot \rangle$ denotes the scalar product in $\ell^2(\mathbb{N})$ and $\delta_1 = (1,0,0,\ldots)^T$.
In what follows, equalities
of the form $\Sigma_\mathrm{ac}(\J) = S$ should be  understood as
$ (\Sigma_\mathrm{ac}(\J) \setminus S) \cup  (S \setminus\Sigma_\mathrm{ac}(\J))$  being a set of zero Lebesgue measure.

Our main result in this paper is the following:
\begin{theorem}\label{mainTheorem} Under the assumptions \oo\ref{s2}\cc and \oo\ref{per}\cc
\begin{equation}\label{main_eq}
\Sigma_{\ac}(\J ) = \Sigma_{\ac}(\J \p).
\end{equation}
\end{theorem}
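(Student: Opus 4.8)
The plan is to prove the two inclusions in (\ref{main_eq}) separately, the inclusion $\Sigma_{\ac}(\J)\subseteq\Sigma_{\ac}(\J\p)$ being soft and the reverse one analytic. For the soft direction, observe that (\ref{per}) forces $\an n-\ap n\to0$ and $\V n-\Vp n\to0$, so $\J-\J\p$ is tridiagonal with entries tending to $0$, hence compact; by Weyl's theorem $\sigma_\mathrm{ess}(\J)=\sigma_\mathrm{ess}(\J\p)$, and by Floquet theory the latter is the union of the closed bands $\set{E:|\Delta(E)|\le2}$, where $\Delta(E)=\tr T\p(E)$ is the discriminant of $\J\p$ and $T\p(E)$ its (normalized, unimodular) one-period transfer matrix. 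Since the a.c.\ part of the spectral measure is carried by $\sigma_\mathrm{ess}(\J)$ and the band edges $\set{|\Delta(E)|=2}$ form a finite null set, $\Sigma_{\ac}(\J)$ is contained, up to a null set, in the open bands $\set{E:|\Delta(E)|<2}=\Sigma_{\ac}(\J\p)$. For the reverse inclusion I would invoke the transfer-matrix criterion for a.c.\ spectrum: if $|A|>0$ and $\liminf_n\|T_n(E)\|<\infty$ for a.e.\ $E\in A$ (with $T_n(E)$ the normalized transfer matrix from site $1$ to $n$), then $A\subseteq\Sigma_{\ac}(\J)$ up to a null set; this is the standard consequence of Gilbert--Pearson subordinacy together with the Last--Simon bounds. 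It then suffices to show that $\liminf_n\|T_n(E)\|<\infty$ for a.e.\ $E$ in each compact $K$ contained in an open band and avoiding the finitely many zeros of $\Delta'$, and to exhaust $\Sigma_{\ac}(\J\p)$ by such $K$.

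The crucial step is to turn the square-summable $q$-variation (\ref{s2}) into a genuine $\ell^2$ perturbation of the free dynamics. Group the sites into blocks of length $q$ and let $\mathcal T_m(E)$ be the normalized transfer matrix across the $m$-th block; it is a polynomial in the bounded coefficients $\set{\an{mq+j},\V{mq+j}}_{j=1}^q$, so (\ref{s2}) gives, uniformly on $K$ and with the corresponding bounds for $E$-derivatives, $\mathcal T_m(E)\to T\p(E)$ and $\sum_m\|\mathcal T_{m+1}(E)-\mathcal T_m(E)\|^2<\infty$. For $E\in K$ the limit $T\p(E)\in\mathrm{SL}(2,\R)$ is elliptic, and for large $m$ the $\mathcal T_m(E)$ lie in a compact subset of the elliptic locus; hence $\mathcal T_m=N_mR_{\theta_m}N_m^{-1}$ with $N_m\in\mathrm{SL}(2,\R)$ and $R_{\theta_m}$ a rotation, where $N_m,\theta_m$ depend smoothly on $\mathcal T_m$. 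Consequently $\sum_m\|N_{m+1}-N_m\|^2$ and $\sum_m|\theta_{m+1}-\theta_m|^2$ are finite, and telescoping the product yields
\begin{equation}\label{sketchprod}
T_{Mq}(E)=N_M\Big(\prod_{m=M}^{2}R_{\theta_m}(I+F_m)\Big)R_{\theta_1}N_1^{-1},\qquad F_m\df N_m^{-1}N_{m-1}-I\in\mathrm{SL}(2,\R),
\end{equation}
with $\sum_m\|F_m\|^2<\infty$ and, since $\det(I+F_m)=1$, $\tr F_m=O(\|F_m\|^2)$. Thus in the rotation frame the dynamics is a product of rotations perturbed by an $\ell^2$ sequence, precisely the EFGP/Prüfer situation.

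Passing to Prüfer variables for (\ref{sketchprod}), the increment of the log-amplitude is $\tfrac12\tr F_m+\lambda_m\cos(2\phi_m+\psi_m)+O(\|F_m\|^2)$, where $\lambda_m,\psi_m$ are built from the entries of $F_m$ and the phase satisfies $\phi_m=\phi_0+\sum_{k\le m}\theta_k+\sum_{k\le m}O(\|F_k\|)$. The non-oscillatory and quadratic terms are absolutely summable (using $\tr F_m=O(\|F_m\|^2)$), so boundedness of $T_{Mq}(E)$ reduces to controlling the oscillatory series $\sum_m\lambda_m(E)\cos(2\phi_m(E)+\psi_m(E))$. I would extract $\liminf_M\|T_{Mq}(E)\|<\infty$ for a.e.\ $E\in K$ by an $L^2(K,\dE)$ estimate and Fatou: the diagonal contribution is bounded by $\sum_m\int_K|\lambda_m|^2\le\sum_m\|F_m\|^2$, while the off-diagonal terms carry the fast phase $2(\phi_m-\phi_{m'})$ with $\frac{d}{dE}(\phi_m-\phi_{m'})\asymp(m-m')\theta'(E)$ on $K$, so one integration by parts gains $1/|m-m'|$ and boundedness of the Hilbert kernel on $\ell^2$ closes the estimate, using $\sum_m\|F_m\|_{C^1(K)}^2<\infty$. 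Since the intermediate transfer matrices within a block differ from $T_{Mq}$ by uniformly bounded factors, this gives $\liminf_n\|T_n(E)\|<\infty$ a.e.\ on $K$, as required.

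The principal obstacle is this last, analytic step. Realizing the square-summable $q$-variation as an honest $\ell^2$ perturbation via the block diagonalization $\mathcal T_m=N_mR_{\theta_m}N_m^{-1}$ is the conceptual key, but the delicate part is the uniform control of the oscillatory sum on compact subbands: one must keep the $C^1(K)$ norms of the blocks and of $F_m$ square-summable (so the non-stationary-phase estimate is summable), handle the accumulated Prüfer-phase correction $\sum_{k\le m}O(\|F_k\|)$, whose $E$-derivative is only $O(\sqrt m)$ and can produce near-stationary points for nearby indices, and manage the degeneration of the frame $N_m(E)$ and of $\theta'(E)$ as $E$ approaches a band edge or a zero of $\Delta'$. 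These difficulties are contained by working on compact $K$ bounded away from those finitely many points, pairing each single block against the long accumulated phase (where the denominator $\asymp M$ is large), and exhausting the open bands.
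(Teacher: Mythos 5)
Your soft inclusion ($\Sigma_{\ac}(\J)\subseteq\Sigma_{\ac}(\J\p)$ via Weyl's theorem and the pure absolute continuity of periodic spectrum) is fine and is exactly what the paper does. The hard direction, however, has a genuine gap at precisely the step you flag as ``delicate'', and it is not a containable technicality. In Pr\"ufer variables the phase increment at block $m$ is not $\theta_m$ plus a perturbation-independent error: it is $\theta_m+h_m(E,\phi_{m-1}(E))$ with $h_m=O(\|F_m\|)$ depending on the \emph{running phase}. Differentiating in $E$ therefore gives $\frac{d}{dE}h_m=O(\|F_m\|_{C^1})+O(\|F_m\|)\cdot|\phi_{m-1}'(E)|$, and since $\phi_{m-1}'(E)\asymp m$ on $K$, the $m$-th phase correction has derivative of size $m\|F_m\|$, not $\|F_m\|_{C^1}$. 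Consequently the derivative of the accumulated correction in $\phi_m-\phi_{m'}$ is of order $\max(m,m')\bigl(\sum_{k>m'}\|F_k\|^2\bigr)^{1/2}\sqrt{m-m'}$, which for a fixed $\ell^2$ sequence overwhelms the main term $\asymp(m-m')$ unless $m-m'\gtrsim M^2\epsilon^2$ (with $M$ the total length and $\epsilon$ the $\ell^2$ tail); once $M\gtrsim\epsilon^{-2}$ this bad range covers essentially all pairs, integration by parts gains nothing there, and the trivial bound $\sum|\lambda_m\lambda_{m'}|$ over that range diverges. This is exactly the known endpoint obstruction at $p=2$: oscillatory/EFGP/Christ--Kiselev methods of the kind you sketch do prove preservation of a.c.\ spectrum for variation in $\ell^p$, $p<2$ (this is the Kim--Kiselev result \cite{KK} quoted in the introduction, building on \cite{CK,KLS}), and they are also what works in the \emph{random} $\ell^2$ setting \cite{KL}, where phase decorrelation rescues the sums; but no proof of even the special case $\J\p=\J(\bo 1,\bo 0)$, $q=1$ (Deift--Killip \cite{deift_killip}) along these lines is known. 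A secondary, related problem: the criterion you invoke is misstated. Subordinacy theory plus Last--Simon requires $\liminf_N\frac1N\sum_{n\leq N}\|T_n(E)\|^2<\infty$ a.e.\ on $A$; a finite $\liminf_n\|T_n(E)\|$ (boundedness along an $E$-dependent subsequence, which is all your $L^2$-plus-Fatou argument could yield) is not a known sufficient condition, and passing from an $L^2(K)$ bound on the log-amplitude partial sums to the Ces\`aro condition would require exponential-moment control that an $L^2$ bound does not provide.

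It is worth noting how close your reduction is, structurally, to what the paper actually does, and where the roads diverge. Your block diagonalization $\mathcal T_m=N_mR_{\theta_m}N_m^{-1}$ with $\ell^2$ mismatches $F_m$ parallels the paper's renormalization $\widetilde{\mP}_n=M_n^{-1}\mP_nM_{n+1}$ and factorization $U_{n-1}U_n^{-1}=\mI+\mW_n$ with $\{\|\mW_n\|\}\in\ell^2$. The decisive difference is that the paper never attempts to control these products at real energy, where $\mP\p(E)$ is elliptic and one faces the oscillatory sums above. Instead it truncates the perturbation as in (\ref{trunc}), builds an explicit Jost-type $\ell^2$ solution at complex $\zeta$ from an eigenvector of $\mP\p(\zeta)$, obtains the factorization (\ref{final}) of $\frac{d\mu^N_{\ac}}{dE}$, and controls the products off the real axis, where Proposition~\ref{lbound} gives genuine hyperbolicity $|\lambda_n(\zeta)|>1+C_I\im\zeta$; there Denisov's Theorem~\ref{denisov_weight_to_diagonal} bounds the product deterministically, hyperbolicity playing the role that stationary phase cannot play on $\R$. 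The harmonic-function Lemma~\ref{denisov_harmonic_lemma} then converts the off-axis bounds into the uniform entropy bound $\int_I\ln|u_0^N(E)|\,dE\leq C$, and the Killip--Simon entropy semicontinuity (Lemma~\ref{entropy}) removes the truncation. If you want to salvage your outline, the fix is not a sharper stationary-phase argument but this change of venue: estimate off the real line and recover real-energy information via harmonic majorization and entropy.
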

\noindent This theorem generalizes a recent result of Denisov  \cite{Den}, who proved the case $\bo a \equiv 1$, $\lim b_n = 0$, and settles the full  \cite[Conjecture 2]{KL}.

Preservation of absolutely continuous spectrum under decaying perturbation has been intensively studied during the last two decades.

Let us denote by $\bo J(\bo a,\bo b)$ the Jacobi operator defined in (\ref{op}) and consider a Jacobi operator of the form
$\J(\bo a +{\alpha}, \bo b + \beta)$, where
${ \alpha}=\{\alpha_{n}\}_{n=1}^\infty$ and $ \beta=\{\beta_{n}\}_{n=1}^\infty$ are decaying sequences.

Recall the following classical fact: from Weyl's Theorem (see, for
example, \cite[Theorem XIII.14]{RS}), one obtains that the essential
spectrum of any Jacobi operator is preserved under a decaying
perturbation. The absolutely continuous spectrum, on the other hand,
is much easier to destroy. Indeed, Last \cite{L_destr} has
constructed an example of a Jacobi operator $\bo J(\bo1,{\bo b} +
\beta)$ with $\lim_{n\to\infty}{b}_n = \lim_{n\to\infty}\beta_n =
0$, so that $\beta$ is of summable 1-variation (namely, $\sum
|\beta_{n+1}-\beta_n| < \infty$), both $\bo J(\bo1, {\bo b})$ and
$\bo J(\bo1,\beta)$ have purely a.c.\ spectrum on $(-2,2)$ with
essential support $(-2,2)$, but $\bo J(\bo1,{\bo b} + {\beta})$ has
empty absolutely continuous spectrum. In particular, adding a
decaying perturbation of summable $1$-variation to a Jacobi operator
can fully ``destroy'' its absolutely continuous spectrum.

Numerous works have been devoted to the following question: which conditions on $(\bo a, \bo b)$ and on the perturbation $(\alpha, \beta)$ ensure that the essential support of the absolutely continuous spectrum of the perturbed operator $\J(\bo a +{\alpha}, \bo b + \beta)$ coincides with that of $\J(\bo a, \bo b)$?
For recent reviews on the subject see, e.g., the articles by Denisov and Kiselev \cite{DK} and by Killip \cite{Kil_rev}.

In particular, in the 1990's much work has been done towards showing that square-summable perturbations of the free Laplacian do not change its a.c.\ spectrum. For summable perturbations, the analogous result has been known at least from
the 1950's and follows, in particular, from Birman-Kato theory of trace class perturbations (see \cite[vol III]{RS}). The passage to $\ell^2$ perturbations took a lot of effort (see, e.g., \cite{CKR, CK, DK, Kil, Kis, KLS, Rem}). Eventually, Killip and Simon  \cite{KS} (strengthening Deift and Killip \cite{deift_killip}) proved the following.
\begin{theorem*}[\textbf{Killip--Simon}]
\label{th:KS}
    If a perturbation  is \emph{square-summable}, that is
\[\sum_{n=1}^\infty |a_n - 1|^2 + |b_n|^2 <\infty,\]
  then $\Sigma_{ac}(\bo J(\bo a,\bo b))$, the
essential support of the a.c.\ spectrum of $\bo J(\bo a,\bo b)$, is equal to $[-2, 2]$.
\end{theorem*}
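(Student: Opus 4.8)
The plan is to derive the theorem from a \emph{sum rule}: an exact identity equating a spectral functional of $\J(\bo a, \bo b)$ to a polynomial-type functional of the Jacobi parameters. Let $d\mu$ be the spectral measure of $\J(\bo a,\bo b)$ associated with the cyclic vector $\delta_1$, and write $d\mu = \rho_{\ac}(E)\,dE + d\mu_{\mathrm{sing}}$. By the paper's definition, $\Sigma_{\ac}(\J(\bo a,\bo b))$ is, up to null sets, the set $\{E : \rho_{\ac}(E) > 0\}$, since $\im\spr{\delta_1}{(\J - E - i\epsilon)\inv\delta_1} \to \pi\,\rho_{\ac}(E)$ for a.e.\ $E$. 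The square-summability hypothesis makes $\J(\bo a,\bo b) - \J(\bo 1,\bo 0)$ compact ($\an{n}\to 1$, $\V{n}\to 0$), so by Weyl's theorem $\spec{\J(\bo a,\bo b)}$ has essential spectrum $[-2,2]$; hence the a.c.\ part of $\mu$ lives on $[-2,2]$ and the spectrum outside consists only of discrete eigenvalues $\{E_j\}$ accumulating at $\pm 2$. Thus $\Sigma_{\ac}(\J(\bo a,\bo b)) \subseteq [-2,2]$ for free, and the whole problem reduces to showing $\rho_{\ac}(E) > 0$ for Lebesgue-a.e.\ $E \in (-2,2)$.

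The sum rule I would use is the Killip--Simon ($P_2$) sum rule, which (suppressing normalizing constants) reads
\[
\frac{1}{2\pi}\int_{-2}^{2}\log\left(\frac{\sqrt{4-E^{2}}}{2\pi\,\rho_{\ac}(E)}\right)\sqrt{4-E^{2}}\,dE \;+\; \sum_{j} F(E_{j}) \;=\; \sum_{n}\left[\tfrac14 \V{n}^{2} + \tfrac12\,G(\an{n}^{2})\right],
\]
where $G(x) = x - 1 - \log x \ge 0$ and $F \ge 0$ is an explicit function of the eigenvalue location. Since $G(x)$ behaves like $\tfrac12(x-1)^2$ near $x=1$, the right-hand side is finite \emph{exactly} under the square-summability hypothesis $\sum|\an n-1|^2+|\V n|^2<\infty$; this precise matching of the $\ell^2$ condition to the $P_2$ rule (in contrast to the Szegő/$C_0$ rule) is the structural reason the theorem holds. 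Because $\sum_j F(E_j) \ge 0$, the quasi-Szegő integral is bounded above. Splitting the logarithm as $\log = \log_{+} - \log_{-}$, its negative part is integrable against $\sqrt{4-E^{2}}\,dE$ by the elementary bound $\log x \le x - 1$ together with $\int_{-2}^{2}\rho_{\ac}\le 1$; hence the positive part, which blows up precisely where $\rho_{\ac}$ is small, must have finite weighted integral. If $\rho_{\ac}$ vanished on a subset of $(-2,2)$ of positive measure, then $\log_{+} = +\infty$ there while $\sqrt{4-E^2} > 0$, forcing divergence --- a contradiction. Therefore $\rho_{\ac} > 0$ a.e.\ on $(-2,2)$, and combined with the inclusion above this gives $\Sigma_{\ac}(\J(\bo a,\bo b)) = [-2,2]$.

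It remains to establish the sum rule itself, which is the substantive content. I would obtain it from the perturbation determinant $L(z) = \det\left((\J(\bo a,\bo b) - z)(\J(\bo 1,\bo 0) - z)\inv\right)$, analytic on $\overline{\C}\setminus[-2,2]$, transported to the unit disk by the conformal map $z = \beta + \beta\inv$ (with $\beta = 0 \mapsto z = \infty$). On one side, expanding $\log L$ in powers of $\beta$ about $\beta = 0$ produces a Taylor series whose low-order coefficients are regularized traces such as $\tr(\J(\bo a,\bo b) - \J(\bo 1,\bo 0))$ and $\tr(\J(\bo a,\bo b)^2 - \J(\bo 1,\bo 0)^2)$, and these evaluate to the polynomial combinations of $(\bo a, \bo b)$ on the right-hand side; matching the second coefficient yields the Jacobi-parameter side. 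On the other side, the boundary values of $\im\log L$ on $[-2,2]$ are controlled by $\rho_{\ac}$, while the zeros of $L$ in the disk are the eigenvalues $E_j$; a Poisson--Jensen (contour-integration) argument then converts the relevant moment of $\log|L|$ on the boundary plus the Blaschke contribution of the zeros into the spectral side. Equating the two evaluations of the same analytic function produces the identity.

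The main obstacle, as I see it, is the passage from nice perturbations to the general $\ell^2$ case. The determinant computation and termwise Taylor matching are rigorous only for rapidly decaying (say, finite-rank) perturbations, where $L$ extends continuously to the boundary and every series converges; for truncations $\J_N$ agreeing with $\J(\bo 1,\bo 0)$ beyond index $N$, the sum rule holds as a genuine equality. To reach the general case I would let $N \to \infty$: the right-hand sides increase monotonically to the finite $\ell^2$ total, while the spectral measures converge weakly, $\mu_N \to \mu$. The delicate point is that the quasi-Szegő entropy integral is only \emph{weakly upper semicontinuous} in the measure, so in the limit the identity degrades to the inequality
\[
\frac{1}{2\pi}\int_{-2}^{2}\log\left(\frac{\sqrt{4-E^{2}}}{2\pi\,\rho_{\ac}(E)}\right)\sqrt{4-E^{2}}\,dE \;+\; \sum_{j} F(E_{j}) \;\le\; \sum_{n}\left[\tfrac14 \V{n}^{2} + \tfrac12\,G(\an{n}^{2})\right].
\]
This one-sided bound is all the argument above actually needs, but establishing the semicontinuity --- i.e.\ controlling how a.c.\ mass can be lost in a weak limit --- is the technical heart of the proof and the step I expect to demand the most care.
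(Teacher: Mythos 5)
This theorem is not proved in the paper at all: it is quoted as known background, attributed to Killip and Simon with the citation \cite{KS}, so there is no internal proof to compare your attempt against. What you have written is, in strategy, a faithful reconstruction of the argument of the cited source itself: reduction via Weyl's theorem to showing $\rho_{\ac}>0$ a.e.\ on $(-2,2)$; the $P_2$ (quasi-Szeg\"{o}) sum rule, whose coefficient side is finite precisely under the $\ell^2$ hypothesis (this matching is indeed the structural reason the theorem is true, and why the Szeg\"{o}/$C_0$ rule would not do); its derivation via the perturbation determinant and the map $z=\beta+\beta^{-1}$; and the passage from finitely supported perturbations to the general case, where the exact identity survives only as a one-sided inequality via semicontinuity of entropy. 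Your identification of that last step as the technical heart is accurate, and it is exactly the piece of \cite{KS} that the present paper imports for its own main theorem (its Lemma~\ref{entropy} is \cite[Corollary 5.3]{KS}); note also the authors' remark that the perturbation-determinant technique of \cite{KS} is precisely what they could \emph{not} extend to periodic background, which is why the paper's own result is proved by a different, more elementary route. Two small points of care in your sketch: (i) with the quasi-Szeg\"{o} integral written as you have it (with $\rho_{\ac}$ in the denominator inside the logarithm), the relevant property is weak \emph{lower} semicontinuity of that functional---equivalently, upper semicontinuity of the entropy $\int \log \rho_{\ac}$---so your label ``upper semicontinuous'' is attached to the wrong functional, although the final inequality you state is in the correct direction and is indeed all the argument needs; (ii) you also need the eigenvalue sum $\sum_j F(E_j)$ to behave correctly in the limit (a Fatou-type lower-semicontinuity argument using convergence of the discrete eigenvalues of the truncations, together with Lieb--Thirring-type bounds guaranteeing its finiteness), a step your sketch glosses over.
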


An analog of this theorem may hold for a wider class of operators (see the conjecture in \cite[the remark after Theorem 1.6]{KLS1}). Killip \cite{Kil} has proven it for the case of $\bo a = 1$ and $\bo b$ being a periodic sequence plus a square--summable perturbation.

The $\ell^2$ condition is known to be sharp: the works of Delyon, Simon and Souillard  \cite{Del,DSS,Si82} and Kotani-Ushiroya  \cite{Kot, KU} on decaying random potentials
showed in the 1980's that perturbations that are not
square--summable can result in purely singular spectrum.
 Later on, Simon  \cite{Si95} proved that
for any $p>2$ the potentials $\{\beta_{n}\}\in\ell^{p}$ resulting in purely singular continuous spectrum form a dense $G_{\delta}$ set in the topology of $\ell^{p}$. Therefore, the a.c.\ spectrum of any Jacobi operator can be destroyed by adding a perturbation in $\ell^p$ for any $p>2$.

The above mentioned results exploit the decay of the
perturbation. Another, weaker, criterion for a perturbation to
preserve the a.c.\ spectrum has been studied for a long time.
Weidmann \cite{Wei} in 1967 proved that the a.c.\ spectrum is
preserved under a perturbation of  summable $1$-variation (Weidmann
actually proved a variant of this for continuous Schr\"{o}dinger
operators. For a proof of the discrete case, see Dombrowski-Nevai
\cite{DoNe} or Simon \cite{Si96}.)

Golinskii and Nevai \cite{GN} generalized Weidmann's result to
arbitrary $q \geq 1$; their argument, written originally for
orthogonal polynomials on the unit circle, yields the following
result for Jacobi operators:
\begin{theorem*}[\textbf{Golinskii--Nevai}]
\label{tgn}
    Let $\bo J(\bo a,\bo b)$ be a periodic Jacobi operator of period $q$
and let $\{\alpha_{n}\}$ and $\{\beta_{n}\}$ be decaying sequences of summable $q$-variation, namely
\begin{equation}\label{eq:q}
   \sum_{n=1}^\infty |\alpha_{n+q}-\alpha_n| + |\beta_{n+q}-\beta_n|<\infty.
\end{equation}
Then the essential support of the
a.c.\ spectrum of $\bo J(\bo a + \bo {\alpha},\bo b+\bo {\beta})$ is equal to the
spectrum of $\bo J(\bo a,\bo b)$ and, moreover, the spectrum of $\bo J(\bo a + \bo {\alpha},\bo b+\bo {\beta})$
is purely absolutely continuous on the interior of the bands that make up the spectrum of $\bo J(\bo a,\bo b)$.
\end{theorem*}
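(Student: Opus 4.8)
The plan is to deduce the theorem from the boundedness of generalized eigenfunctions combined with the subordinacy theory of Gilbert--Pearson, in the sharpened transfer-matrix form of Last--Simon. Preservation of the \emph{essential} spectrum is immediate: since $\bo\alpha,\bo\beta$ decay, the difference $\J(\bo a+\bo{\alpha},\bo b+\bo{\beta})-\J(\bo a,\bo b)$ is a Jacobi operator with coefficients tending to $0$, hence compact, so by Weyl's theorem the essential spectra coincide and equal the union of the closed bands $\{E:|\Delta(E)|\le 2\}$ of the periodic operator, where $\Delta(E)=\tr M(E)$ is the discriminant and $M(E)$ the monodromy matrix. It therefore suffices to show that on the interior of each band the perturbed spectrum is purely absolutely continuous with essential support the whole band, and by the Last--Simon criterion it is enough to prove that for every energy $E$ with $|\Delta(E)|<2$ all solutions of $(\J(\bo a+\bo{\alpha},\bo b+\bo{\beta})-E)u=0$ are bounded, with bounds locally uniform in $E$.

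Next I would pass to transfer matrices over one period. Let $T_n(E)\in SL(2,\R)$ be the one-step transfer matrix of the perturbed equation and set $A_m(E)=T_{mq}(E)\cdots T_{(m-1)q+1}(E)$, the product across the $m$-th block. For the unperturbed periodic operator each block product equals the constant matrix $M(E)$; by Floquet theory $E$ lies in the interior of a band precisely when $|\Delta(E)|<2$, in which case $M(E)$ is elliptic with distinct non-real eigenvalues $e^{\pm i\theta(E)}$ on the unit circle and is diagonalizable. Since $\alpha_n,\beta_n\to0$ we have $A_m(E)\to M(E)$, and --- this is the crucial point --- the difference $A_{m+1}(E)-A_m(E)$ is controlled entrywise by the increments $|\alpha_{mq+k}-\alpha_{(m-1)q+k}|$ and $|\beta_{mq+k}-\beta_{(m-1)q+k}|$, so hypothesis \eqref{eq:q} gives
\[
\sum_{m=1}^\infty \|A_{m+1}(E)-A_m(E)\| < \infty,
\]
locally uniformly on the open bands.

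The heart of the argument is then a discrete Levinson/Weidmann-type lemma: if $\{A_m\}\subset SL(2,\C)$ has summable variation and converges to an elliptic matrix $M$ with distinct unimodular eigenvalues, then the products $A_m\cdots A_1$ are bounded. I would prove it by diagonalizing the limit, $P^{-1}MP=\operatorname{diag}(e^{i\theta},e^{-i\theta})$, conjugating the cocycle to $\hat A_m=P^{-1}A_mP$ (which still has summable variation and tends to the diagonal limit), and then \emph{asymptotically diagonalizing} the sequence: one constructs transformations $Q_m\to I$ of summable variation with $Q_{m+1}^{-1}\hat A_m Q_m$ diagonal up to a summable error, so that the telescoping product has a modulus-one diagonal part while the off-diagonal and error contributions form a convergent product. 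I expect this to be the main obstacle, precisely because the limit is elliptic: the two eigenvalues sit on the unit circle rather than splitting into growing and decaying directions, so there is no hyperbolic dichotomy to exploit and one must control a genuinely oscillatory product, and the summable-variation (rather than summable) hypothesis forces a summation-by-parts argument to keep the modulus-one errors from accumulating. Care is also needed near the band edges $\Delta(E)=\pm2$, where the eigenvalues collide and $P(E)$ degenerates; these energies form a finite set per period, hence a set of measure zero, and are excluded by working on compact subsets of the open bands.

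Finally, applying the lemma at each $E$ with $|\Delta(E)|<2$ shows $\sup_n\|T_n(E)\|<\infty$ there, locally uniformly. By the Last--Simon transfer-matrix criterion this places every such $E$ in the essential support of the absolutely continuous spectrum and rules out any singular spectrum on the open bands, so the spectrum of $\J(\bo a+\bo{\alpha},\bo b+\bo{\beta})$ is purely absolutely continuous on the interior of the bands. Together with the Weyl argument identifying the essential spectrum with the band union, this yields $\Sigma_{\ac}(\J(\bo a+\bo{\alpha},\bo b+\bo{\beta}))=\spec{\J(\bo a,\bo b)}$ and the asserted purity on the band interiors.
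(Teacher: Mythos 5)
First, a point of orientation: the paper does not prove this theorem at all. It is quoted as background, with the proof attributed to Golinskii and Nevai \cite{GN}, and the paper's own contribution (Theorem \ref{mainTheorem}) is a generalization in which the $\ell^1$ condition (\ref{eq:q}) is weakened to square-summable $q$-variation. The paper's machinery --- truncated perturbations, Jost solutions, Killip--Simon entropy semicontinuity, Denisov's harmonic-function lemma --- exists precisely because your strategy cannot survive that weakening: under an $\ell^2$-variation hypothesis the transfer matrices need not remain bounded on the bands, so there is nothing to feed into subordinacy theory. For the $\ell^1$ statement at hand, your route (Weyl's theorem for the essential spectrum; summable variation of the period-$q$ block matrices; a Levinson-type boundedness lemma at elliptic energies; then Gilbert--Pearson/Last--Simon) is the classical and correct one, essentially the argument of Weidmann and of Golinskii--Nevai themselves.

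That said, two genuine technical defects in your write-up need repair, and they interact. First, your matrices are not in $SL(2,\R)$: for a Jacobi operator, $\det T_n(E) = a_{n-1}/a_n$, so the block products have $\det A_m = a_{(m-1)q}/a_{mq} \neq 1$, and the matrices you apply your lemma to do not satisfy its hypotheses. (The paper runs into exactly this issue with its matrices $\mP_n$ and ``fixes'' it by renormalization.) Here the fix is easy because determinants telescope: $\prod_{k\leq m}\det A_k = a_0/a_{mq}$ is bounded above and away from zero, so you may pass to $\widetilde A_m = (\det A_m)^{-1/2}A_m \in SL(2,\R)$, which still has summable variation and still converges to $M(E)$, at the cost of uniformly bounded scalar factors. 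Second, and more seriously, your key lemma is \emph{false as stated}, i.e.\ for $SL(2,\C)$: take $A_m = \mathrm{diag}\bigl((1+\tfrac1m)e^{i\theta},\,(1+\tfrac1m)^{-1}e^{-i\theta}\bigr)$. These have summable variation and converge to the elliptic matrix $\mathrm{diag}(e^{i\theta},e^{-i\theta})$, yet $\|A_m\cdots A_1\|\sim m$. The point is that summable variation controls $\sum_m\|A_{m+1}-A_m\|$ but not $\sum_m\|A_m-M\|$, so the eigenvalues of the individual matrices can drift off the unit circle non-summably.

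What saves your argument is realness, which you must invoke explicitly: after the normalization above, $\widetilde A_m\in SL(2,\R)$, and once $|\tr \widetilde A_m|<2$ (true for all large $m$, locally uniformly on $\{E : |\Delta(E)|<2\}$, since $\widetilde A_m\to M(E)$) its eigenvalues are \emph{exactly} unimodular. Then the clean finish is to diagonalize each matrix, not just the limit: $\widetilde A_m = P_mD_mP_m^{-1}$ with $\|D_m\|=1$, where the eigenprojections depend analytically on the matrix while the eigenvalues stay separated, so $\sum_m\|P_{m+1}-P_m\|<\infty$ and $P_m\to P$. Writing $\widetilde A_m\cdots\widetilde A_1 = P_mD_m(P_m^{-1}P_{m-1})D_{m-1}(P_{m-1}^{-1}P_{m-2})\cdots D_1P_1^{-1}$ bounds the product by $\|P_m\|\,\|P_1^{-1}\|\prod_k\bigl(1+\|P_k^{-1}P_{k-1}-\mI\|\bigr)<\infty$. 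In particular, no summation by parts or oscillation estimate is needed; the difficulty you anticipate there genuinely arises only in the $\ell^2$ setting, which is exactly why the paper abandons this approach for its main theorem. With these two repairs, the rest of your proposal (the variation bound on block products, exclusion of the finitely many band edges, the subordinacy conclusion) goes through.
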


Thus, Theorem \ref{mainTheorem} extends the Golinskii--Nevai
theorem, showing that the $\ell^1$ condition (\ref{eq:q}) can be
relaxed similarly to what has been done in the case of a summable
perturbation. As we already mentioned, one cannot weaken it to
$\ell^p$ for $p>2$. The condition (\ref{per}) on the limiting
behavior of $\bo a$ and $\bo b$ is also natural. In the case of a
summable $q$-variation it is obviously implied. If dropped in our
case, the a.c.\ spectrum of the operator cannot be determined by the
limiting behavior of $\bo a$ and $\bo b$ only. Indeed, Last (private
communication) has recently constructed examples showing, in
particular, that \cite[Conjecture 9.5]{BLS} is false (except, maybe,
for $q=1$).

A special case of Theorem \ref{mainTheorem}, with $\J(\bo a,\bo b)= \J(\bo1,\bo0)$ and $\alpha \equiv 0$, was conjectured by Last  \cite[Conjecture 1.6]{L_destr}. A variant of this conjecture for the special case $q = 1$
has also been made by Simon  \cite[Chapter 12]{SiOP}.
A notable result in this direction has been obtained by
Kupin  \cite{Kup} who showed that the essential support of
the a.c.\ spectrum of $\J(\bo1, \bo0)$ is still preserved if a decaying sequence  $\beta$ of a square-summable variation (with $q=1$) is added to $\bo b$, under an additional restriction that $\beta \in \ell^{m}$ for some $m\in \N$.
In general, more tools exist to explore spectra of decaying perturbations of $\J(\bo1, \bo0)$, e.g., sum rules and trace formulas used, in particular by Kupin.  Extensions of these methods to general periodic operators are sometimes quite involved. A recent work of Damanik, Killip and Simon  \cite{DKS} has given some definitive results in this direction. Moreover, methods used by Kupin impose very restrictive conditions on the behavior of $\alpha$ and $\beta$.

Kim and Kiselev  \cite{KK} have relaxed the conditions on $\alpha$ and $\beta$ using an analog of the techniques developed by Christ and Kiselev  \cite{CK} for continuous (namely, differential) Schr\"odinger operators.
The results of Kim and Kiselev imply that if $\bo a = \bo 1$, $\bo b = \bo 0$, $\alpha = \bo 0$ and  $\beta$ is a bounded (but not necessarily decaying) sequence obeying $\sum_{n=1}^\infty |\beta_{n+1}-\beta_n|^p<\infty$ for some $p < 2$, then the essential support of the a.c.\ spectrum coincides with  $[-2+\limsup\beta_n, 2+\limsup\beta_n]\cap[-2+\liminf\beta_n, 2+\liminf\beta_n]$. In the special case $\lim \beta_n = 0$ this implies $\Sigma_\ac(\J(\bo 1, \beta)) = [-2,2] = \Sigma_\ac(\J(\bo 1,\bo 0))$.

Very significant progress has been recently made by Denisov  \cite{Den}, who proved the full  \cite[Conjecture 1.6]{L_destr}. To analyze the a.c.\ part of the spectral measure of a perturbed
operator, Denisov  \cite{Den} uses an important factorization of
the Radon-Nikodym derivative of the spectral measure. This
expression, involving the Jost function,  has been proven by Killip
and Simon  \cite{KS} for trace class
perturbations of the free Laplacian.  We do not know how to extend the technique of perturbation determinants from  \cite{KS}
to the case of periodic Jacobi operators.
However, approximating the operator under
consideration by finitely supported perturbations, combined with a number
of ideas of Denisov  \cite{Den}, we are able to prove
Theorem \ref{mainTheorem}  by a more elementary
technique.

For a finitely supported perturbation $J^N$ and an energy $\zeta = E+i\epsilon$
in the upper half-plain of $\C$ we can explicitly construct an $\ell^2(\N)$
solution of $J^Nu^N(\zeta) =\zeta u^N(\zeta)$. As $\epsilon\searrow0$,
the vector $u^N(\zeta)$ becomes a Jost solution, that is,
asymptotically a Bloch wave.  Using $u^N_0(\zeta)$ similarly to
the way it is done in  \cite{KS}, we are able to
express the Radon--Nikodym derivative of the spectral measure as a
product of a well-behaving function depending on the parameters
of the periodic operator and a harmonic function $u^N_0(\zeta)$
depending on the perturbation, which can be called, by analogy
with  \cite{KS}, the Jost function.

As we show below, to prove that the a.c.\ spectrum of
$\J $ fills $\spec{\J \p}$ it will be sufficient to bound the integrals
$\int_I \ln |u^N_0(E)| dE$ uniformly in $N$, when $I\subset \spec{\J \p}$
is an interval of a special kind. The behavior of $u^N_0$ on $\R$
is difficult to analyze, so the estimates of the above mentioned
integrals  are obtained by controlling the behavior of $u^N_0$ off the real line.

The rest of the paper is organized as follows. In Section 2 we give
an outline of the proof. Section 3 contains the derivation of the
expression of the Radon--Nikodym derivative of the spectral
measure for the case of a truncated perturbation. In Section 4 we
analyze the Jost function. Section 5 concludes the
proof of the Main Theorem.

We would like to thank Y.~Last and S.~Sodin for pleasant and useful discussions.
This research was supported in part
by The Israel Science Foundation (Grant No.\ 1169/06) and by Grant 2006483
from the United States-Israel Binational Science Foundation (BSF), Jerusalem, Israel.

\section{Outline of the proof}
Given an operator $\J $ as above,  we consider, for $E \in \R$, the
associated eigenvalue equation
\begin{equation}\label{eigv}
\J \psi = E\psi,
\end{equation}
where $\psi$ is a sequence of complex numbers. For $n\in\N$, define the
one-step transfer operators associated with (\ref{eigv}) by
\begin{equation}\label{tr1_matr}
    \mT_n(E) =
    \matr {\frac{E - \V{n}}{\an{n}}}{-\frac{\an{n-1}}{\an{n}}}10:
    \vect{\psi(n)}{\psi(n-1)} \longmapsto \vect{\psi(n+1)}{\psi(n)}.
\end{equation}
(with $a_0=a\p_0$).

For  $q$ from (\ref{s2}) and $n\geq 0$, denote by $\mP_n(E)$  the
following product of $q$ consecutive one-step transfer matrices:
\begin{equation}\label{tr_period}
\mP_n(E)\!=\! \mT_{(n+1)q}(E)\cdots\mT_{nq + 2}(E) \mT_{nq+1}(E)\!
=\! \matr{A_n(E)}{B_n(E)}{C_n(E)}{D_n(E)}\!.
\end{equation}
It is easy to see that $A_n$, $B_n$, $C_n$ and $D_n$ are polynomials
of degrees $q$, $q-1$, $q-1$ and $q-2$, respectively, with non-zero
leading term and real coefficients.

For $\J \p$, the matrix $\mP\p_n(E)$ does not depend on $n$, so we write:
\begin{equation}\label{tr_periodic}
\mP\p(E) = \matr{A(E)}{B(E)}{C(E)}{D(E)}.
\end{equation}

In what follows we use the following well known fundamental fact
(see, e.g.,  \cite{L} for a fuller exposition):
 for periodic operators the spectrum is equal to
\begin{equation}\label{delta}
\spec{\J \p} = \sett{E}{|\tr \mP\p(E)| \leq 2},
\end{equation}
and it is purely absolutely continuous.

Let $\mu$ be the spectral measure of $\J $ with respect to the cyclic vector $\delta_1$, that is:
\[
\spr{\delta_1}{(\J -E)\inv\delta_1} = \int\frac{d\mu(x)}{x-E}.
\]
From Weyl's theorem  \cite{RS}, we know that $\sigma_{\mathrm{ess}}(\J ) =
\sigma_{\mathrm{ess}}(\J \p)$. We show that the a.c.\ spectrum of
$\J $ fills $\spec{\J \p}$  (i.e., $|\spec{\J\p} \setminus
\Sigma_{\mathrm{ac}}(\J)| = 0$) by choosing a family of intervals
covering almost every point of $\spec{\J \p}$ and proving that
\[
\int_{I}{\ln \frac{d\mu_{\ac}}{dE}(E)\; dE} >-\infty,
\]
for any interval $I$ in this family. Here $\mu_{\ac}$ is the a.c.\ part of the spectral measure $\mu$.

The main idea of our proof is to approximate $\J $ by truncated
perturbations of $\J \p$. Namely, for $N\in \N$ we define $\J ^N$ to be the Jacobi operator with
\begin{equation}\label{trunc}
    a_n^N = \choice{a_n}{n< (N-1)q}{a_n\p}{n\geq (N-1)q}\;
    \V n^N = \choice{\V n}{n\leq (N-1)q}{\Vp n}{n> (N-1)q}\!\!.
\end{equation}

Let $\mu^N$ be the spectral measure of $\J ^N$ with respect to the
same cyclic vector $\delta_1$. Since, as $N\rightarrow\infty$, the
coefficients defined by (\ref{trunc}) converge to the Jacobi
parameters of $\J \p$, the measures $\mu^N$ converge weakly to the
spectral measure $\mu$ for $\J $. Hence, we can use the
semi-continuity of entropy. Namely, we use the following
\begin{lemma}[{ \cite[Corollary 5.3]{KS}}]\label{entropy}
Suppose a sequence of absolutely continuous measures $\set{\rho_n(x)dx}$  converges weakly to a measure $\rho(x)dx$.
Then for any measurable set $S$,
\[\int_S \ln \rho(x) dx \geq \liminf_{n\rightarrow\infty}\int_S \ln \rho_n(x) dx.
\]
\end{lemma}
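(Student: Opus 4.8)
The plan is to realize $\int_S \ln\rho\,dx$ as the relative entropy of $\mu$ with respect to Lebesgue measure on $S$ and to exhibit it as an infimum of weakly continuous functionals of $\mu$; since an infimum of continuous functions is upper semicontinuous, the desired inequality (indeed with $\limsup$ in place of $\liminf$) follows at once. Throughout I may assume that all the measures live on a common compact interval $K\supseteq S$ with $|S|<\infty$, as is the case in our application where the spectral measures come from operators of uniformly bounded norm; then weak convergence means $\int h\,d\mu_n\to\int h\,d\mu$ for every $h\in C(K)$.

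Writing $\rho=d\mu_{\ac}/dx$ for a finite measure $\mu$ on $K$, the heart of the matter is the Gibbs variational identity
\[
\int_S \ln\rho\,dx \;=\; \inf_{g}\Big[\,\int_S \ln g\,dx \;+\; \int_K g^{-1}\,d\mu \;-\; |S|\,\Big],
\]
the infimum taken over continuous $g$ with $\inf_K g>0$. The inequality ``$\le$'' is immediate from the concavity of the logarithm: the tangent-line bound $\ln t\le \ln g + t/g - 1$ with $t=\rho(x)$, integrated over $S$ and combined with $\int_S \rho/g\,dx = \int_S g^{-1}\,d\mu_{\ac}\le \int_K g^{-1}\,d\mu$ (using $\mu\ge\mu_{\ac}$ and $g^{-1}\ge 0$), gives the bound for each admissible $g$.

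The reverse inequality --- the \emph{tightness} of this formula --- is the one genuine difficulty. I would establish it by choosing $g$ close to $\rho$ on the portion of $S$ where $\rho$ is bounded away from $0$ and $\infty$, while forcing $g\to\infty$ on the rest of $K$, so as to drive to zero both the contribution of any singular part of $\mu$ on $S$ and that of all the mass of $\mu$ outside $S$; then $\int_S \ln g\,dx\to\int_S \ln\rho\,dx$ and $\int_K g^{-1}\,d\mu\to|S|$. The only bookkeeping is the set where $\ln\rho=-\infty$, i.e.\ $\rho=0$ on a subset of $S$ of positive measure; there the left-hand side is $-\infty$ and the identity is vacuous, so one first truncates $\rho$ from below by $\delta>0$ and lets $\delta\searrow 0$.

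Granting the identity, the conclusion is soft. For each fixed admissible $g$ the map $\mu\mapsto \int_S \ln g\,dx + \int_K g^{-1}\,d\mu - |S|$ is weakly continuous, its sole $\mu$-dependent term being $\int_K g^{-1}\,d\mu$ with $g^{-1}\in C(K)$. Hence $\mu\mapsto\int_S\ln\rho\,dx$, an infimum of weakly continuous (a fortiori upper semicontinuous) functionals, is weakly upper semicontinuous. Applied to $\rho_n\,dx\to\rho\,dx$ this yields $\limsup_n\int_S \ln\rho_n\,dx\le \int_S \ln\rho\,dx$, which is even a little stronger than the asserted bound with $\liminf$. I expect the tightness of the variational formula to be the only real obstacle; everything else is routine.
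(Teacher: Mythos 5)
The paper gives no proof of this lemma at all --- it is quoted directly from \cite[Corollary 5.3]{KS} --- so the relevant comparison is with the proof given there, and your proposal follows essentially the same route as Killip--Simon: they too obtain semicontinuity by exhibiting the entropy as an infimum of functionals that are affine and weakly continuous in the measure, via a Donsker--Varadhan--type variational principle which coincides with your Gibbs identity after the substitution $g = e^{-F}$. Your ``$\leq$'' direction (tangent-line bound $\ln t \leq \ln g + t/g - 1$ together with $\mu \geq \mu_{\mathrm{ac}}$) is complete and correct, and so is the soft deduction at the end: an infimum of weakly continuous functionals is weakly upper semicontinuous, which indeed yields the stronger $\limsup$ form of the inequality. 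A genuine virtue of your formulation is that you set $\rho = d\mu_{\mathrm{ac}}/dx$ for a general limit measure: as stated in the paper the lemma assumes the limit measure is purely absolutely continuous, whereas in its application the limit is the full spectral measure of $\J$, which need not be; your version is the one actually needed (and the one proved in \cite{KS}).

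The one substantive gap is the tightness direction of the variational identity, which you explicitly defer; it is not bookkeeping but the entire analytic content of the result. A complete argument must (i) produce, via Lusin and Tietze, a continuous $g$ with $\delta \leq g \leq M$ that agrees with the truncation $\max(\rho,\delta)$ off a set of small Lebesgue measure; (ii) use regularity of the Borel measure $\mu$ to choose open sets of Lebesgue measure $o(1/\ln M)$ containing the Lebesgue-null carrier of the singular part of $\mu$ and containing $K\setminus S$, and force $g = M$ there --- without this, $\int_K g^{-1}\,d\mu$ picks up terms of size $\mu_{\mathrm{sing}}(S)/\delta$ and the full mass of $\mu$ off $S$ (for instance, for $\mu = dx + \delta_{1/2}$ on $S=[0,1]$ the naive choice $g = \max(\rho,\delta)$ gives the value $1$ instead of $0$), so the identity genuinely fails for unspiked $g$; and (iii) treat the case $\int_S \ln\rho\,dx = -\infty$, which is \emph{not} vacuous for your argument: upper semicontinuity at the limit measure $\mu$ requires the infimum to equal $-\infty$ there as well, which your truncation $\delta \searrow 0$ does deliver by monotone convergence once (i)--(ii) are in place. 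These steps are standard measure theory, but until they are written out the proposal is an outline of the Killip--Simon proof rather than a proof.
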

For $\zeta = E +i\epsilon, \epsilon >0$, the operator $G^N(\zeta)=
(\J ^N - \zeta)\inv$ is well-defined and bounded since
$\zeta\notin\sigma(\J ^N)$. For
\[G^N(1,1;\zeta)=\spr{\delta_1}{G^N(\zeta)\delta_1},\]
it is known that
\begin{equation}
\label{green}
\im G^N(1,1;E+i0)\df\lim_{\epsilon\searrow0} \im G^N(1,1;E+i\epsilon) = \pi\frac{d\mu^N_{ac}}{dE}(E).
\end{equation}
Since $(\J ^N - \zeta)\times G^N(\zeta)= \mI$, the vector $u = G^N\delta_1$ solves the system
\begin{equation}\label{greensys0}
\left\{\begin{array}{lcl}
\an{1}^Nu_2 + (\V1^N - \zeta)u_1& = &1,\\
\an{n}^Nu_{n + 1}+(\V n^N - \zeta)u_n + \an{n-1}^Nu_{n - 1}& =& 0,
\;\mbox {for every $n\geq 2$}.
\end{array}\right.
\end{equation}

To solve the above system, we slightly modify it, considering
instead   $\ell^2(\{0\}\cup\N)$, and find a vector $u^N(\zeta)\in\ell^2(\{0\}\cup\N)$ satisfying
\begin{equation}\label{greensys}
\an{n}^Nu^N_{n + 1}+(\V n^N - \zeta)u^N_n + \an{n-1}^Nu^N_{n - 1} = 0,
\;\mbox {for every $n\geq 1$},
\end{equation}
defining $\an0^N = \ap0.$ For $u^N$ a solution of (\ref{greensys}),
the vector $\frac{u^N(\zeta)}{\ap0u^N_0(\zeta)}$ solves (\ref{greensys0}), hence
\begin{equation}
\label{Gu}
G^N(1,1;\zeta)= \frac{-u^N_1(\zeta)}{\ap0u^N_0(\zeta)},
\end{equation}
provided $u^N_0(\zeta) \neq 0$. Indeed, $u^N_0(\zeta) = 0$ implies
$(\J ^N-\zeta)u=0$, meaning $\zeta\in\spec{\J ^N}$, which is false
for $\epsilon \neq 0$. However, we still need to prove that
$\lim_{\epsilon\searrow 0}u^N_0(E+i\epsilon) \neq 0$.

In the next section we solve (\ref{greensys}) and obtain the key
formula (\ref{final}) of the form
\[\frac{d\mu^N_{ac}}{dE}(E) =
\frac{ F(E)}{| u^N_0(E)|^2},
\]
where $F(E)$ is a function that depends on the Jacobi parameters of
$\J\p$ only. As will be shown, this function is well-behaving on
$I$, therefore it does not affect the estimate on the entropy.
Hence,  a lower bound on $\int_I\ln\frac{d\mu^N_\ac}{dE} (E) \;dE$
will follow from an upper bound on $\int_I\ln |u^N_0(E)| \;dE$.  We
do not obtain pointwise estimates for $u^N_0$, but rather estimate
the integral $\int_I\ln |u^N_0(E)| \;dE$ using, following Killip
 \cite{Kil} and Denisov  \cite{Den}, the
following
\begin{lemma}[{ \cite[Lemma \textbf{A}.3]{Den}}]\label{denisov_harmonic_lemma}
Assume $f(\zeta)$ is harmonic on
\[\Pi = \sett{\zeta = x + iy \in\C}{a < x < b, 0 < y < c },\]
continuous on the closure of ${\Pi}$, and for some $C, \alpha > 0$,
\begin{equation*}
\begin{array}{ll}
    \int_{a}^{b}{f^+(x)dx} < C,&\mbox{where } f^+= \max(f, 0), \;
        f^- = f^+ - f,\\ 
    f(\zeta) > - C y^{-\alpha},& \mbox{for }\zeta\in{\Pi},\\
    f(\zeta) < C, & \mbox{for }y > C (1 + \alpha)^{-1} > 0.
\end{array}
\end{equation*}
Then there exists a constant $B>0$, depending on $C$ and $\alpha$, so that
\begin{equation*}
\int_a ^b{f^{-}(x)dx} < B.
\end{equation*}
\end{lemma}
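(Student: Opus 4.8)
The plan is to reduce the statement to a lower bound on the unweighted integral of $f$ along the base of $\Pi$, and then to extract that bound from the three hypotheses through the harmonic-measure (Poisson) representation of $f$, with the critical height taken to be $h_0 = C/(1+\alpha)$.

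First I would reduce. Since $f^- = f^+ - f$, the first hypothesis gives
\[
\int_a^b f^-(x)\,dx = \int_a^b f^+(x)\,dx - \int_a^b f(x)\,dx < C - \int_a^b f(x)\,dx ,
\]
so it suffices to bound $\int_a^b f(x)\,dx$ from below by a constant depending only on $C$ and $\alpha$. Next, fix a reference point $p = p_0 + i h_0$ with $h_0 = C/(1+\alpha)$, which lies in the region where the third hypothesis applies. Writing $\omega_p$ for the harmonic measure of $\Pi$ seen from $p$ and $\rho_p(x)$ for its density on the base $\{y=0\}$, the representation $f(p) = \int_{\partial\Pi} f\, d\omega_p$ rearranges to
\[
\int_a^b f(x)\,\rho_p(x)\,dx = f(p) - \int_{\Gamma} f\, d\omega_p ,
\]
where $\Gamma$ is the union of the top edge and the two vertical edges. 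Because $\rho_p$ is bounded above and below on each compact subinterval of $(a,b)$, a lower bound for the left-hand side yields a lower bound for $\int_a^b f\,dx$ (treating, if necessary, a compact subinterval and the two short end pieces separately). On the right, $f(p)$ is controlled on both sides: $f(p) < C$ from above and $f(p) > -C h_0^{-\alpha}$ from below, and $C h_0^{-\alpha} = C\big(C/(1+\alpha)\big)^{-\alpha}$ depends only on $C$ and $\alpha$. On the part of $\Gamma$ lying in $\{y > h_0\}$ --- all of the top edge and the upper portions of the side edges --- the third hypothesis gives $f < C$, which is exactly the one-sided bound needed there, and its harmonic-measure mass is at most one.

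The main obstacle is the contribution of the lower portions $\{0 < y \le h_0\}$ of the two vertical edges, meeting the base at the two corners of $\Pi$. There only the second hypothesis $f > -C y^{-\alpha}$ is available, and this is a lower bound on $f$, whereas the rearranged identity demands an upper bound on $\int f\, d\omega_p$ over those segments, i.e.\ control of the positive excursions of $f$ near the corners. I would resolve this in two steps. First, the harmonic-measure density of $\Pi$ on a vertical edge, seen from the interior point $p$, vanishes (linearly) as one approaches a right-angle corner, so that the negative part is harmless: $\int_0^{h_0} y^{-\alpha}\,d\omega_p$ along each side converges and is bounded in terms of $C$ and $\alpha$, and the crossover height $h_0 = C/(1+\alpha)$ is chosen precisely to balance this contribution against the mass $C\,\omega_p(\{y>h_0\})$ coming from the upper region. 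Second, a positive excursion of $f$ on a lower side edge is tied, through harmonicity and continuity up to the corner, to the positive mass of $f$ on the adjacent part of the base; since that mass is controlled by $\int_a^b f^+ < C$, a boundary Poisson estimate near the corner bounds the positive contribution of the side edges as well. Combining the bound on $f(p)$, the ceiling $C$ on $\{y>h_0\}$, and the corner estimate produces a lower bound $\int_a^b f(x)\,dx \ge -B'(C,\alpha)$, and hence, by the reduction, $\int_a^b f^-(x)\,dx < B(C,\alpha)$. The delicate point throughout is the corner analysis, where the one-sided hypothesis must be upgraded, via the decay of harmonic measure and the $L^1$ control of $f^+$ on the base, into a genuine two-sided estimate.
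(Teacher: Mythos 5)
The paper itself does not prove this lemma: it quotes it from \cite[Lemma A.3]{Den} and merely remarks that Denisov's argument also covers this slightly modified statement. So your proposal must stand on its own, and it has a genuine gap at exactly the point you yourself flag as delicate. The skeleton (reduce to a lower bound on $\int_a^b f\,dx$, then use the harmonic--measure identity from a point $p$ at height $h_0$) is reasonable, and you correctly isolate the dangerous term: the integral of $f$ against $\omega_p$ over the low portions $\{0<y\le h_0\}$ of the vertical edges, where only the lower bound of hypothesis 2 is available. But your proposed resolution --- that a positive excursion of $f$ on a lower side edge is ``tied to the positive mass of $f$ on the adjacent part of the base,'' hence controlled by $\int_a^b f^+\,dx<C$ --- is false. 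Fix $\alpha\ge 1$, let $h_0=C(1+\alpha)^{-1}$, take a small constant $M=M(C,\alpha)>0$, and for $0<\delta<h_0/2$ consider
\[
f(x+iy)\;=\;M\,\frac{y-\delta}{(x-a+\delta)^2+(y-\delta)^2}\;=\;M\,\re\frac{i}{z-\xi},
\qquad \xi=(a-\delta)+i\delta .
\]
On the base $f<0$, so the base positive mass is zero; for $0<y<\delta$ one has $|f|\le M(\delta-y)/\delta^2\le M\delta^{-1}\le M\delta^{-\alpha}\le My^{-\alpha}$ (using $\alpha\ge1$), so hypothesis 2 holds once $M\le C$; for $y>h_0$ one has $0\le f\le M/(y-\delta)\le 2M/h_0<C$ once $M<Ch_0/2$. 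Yet on the left edge $\sup_y f(a+iy)=M/(2\delta)$ and $\int_0^{h_0}f^+(a+iy)\,dy\sim M\ln(h_0/\delta)\to\infty$ as $\delta\to0$. Thus with all three hypotheses holding uniformly, the side positive mass (pointwise or in $L^1$ of arclength) is unbounded while the adjacent base positive mass vanishes: no ``boundary Poisson estimate near the corner'' can bound the former by the latter. (The harmonic--measure--weighted side term you actually need stays bounded in this example, but only because the density of $\omega_p$ on the edge vanishes linearly at the corner and kills the logarithm; proving such a bound in general is precisely the nontrivial content of Denisov's argument, and it must use hypotheses 2 and 3 quantitatively --- positive mass sitting low on a side either propagates up into $\{y>h_0\}$, where hypothesis 3 caps it, or must be screened by negative mass, which hypothesis 2 caps. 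Your proposal replaces this interplay by an appeal to hypothesis 1, which cannot work.)

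There are two further gaps. First, the passage from $\int_a^b f\,\rho_p\,dx$ to $\int_a^b f\,dx$: the density $\rho_p$ of $\omega_p$ on the base also vanishes linearly at the two corners $a,b$, so a lower bound on the weighted integral says nothing about $\int f^-$ near the endpoints; in the example above the entire $f^-$ mass on the base sits within distance $\sim\delta$ of the corner, where $\int f^-\rho_p\,dx\to0$ while $\int f^-\,dx$ stays of order $M$. The conclusion of the lemma includes these end pieces, and ``treating the end pieces separately'' is not an argument --- no separate argument is offered, and none of the hypotheses gives pointwise control of $f$ on the base near a corner (hypothesis 2 degenerates as $y\searrow0$). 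Second, your convergence claim $\int_0^{h_0}y^{-\alpha}\,d\omega_p<\infty$ relies on the linear corner decay of the density and therefore holds only for $\alpha<2$, whereas the lemma is asserted for all $\alpha>0$; moreover this term is not even needed for the inequality you want (negative values of $f$ on the top and sides only help an upper bound for $\int_\Gamma f\,d\omega_p$), which suggests the two directions of estimate got interchanged at this point. In sum, the two places where the lemma is genuinely hard --- the low side edges and the base corners --- are handled by appeals that are, respectively, false and absent, so the proposal does not constitute a proof.
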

Note that although the statement is slightly different from
 \cite{Den}, the argument of Denisov proves it as well.

As one can see from Lemma~\ref{denisov_harmonic_lemma}, we need to
provide estimates for $\ln |u_0^N(\zeta)|$ off the real line. The
value of $u^N_0(\zeta)$ comes from the transfer matrices of $\J^N$
applied to the fixed boundary condition. Below we show how the
behavior of transfer matrices can be analyzed and an estimate on
$u^N_0(\zeta)$ can be obtained when the matrices have an eigenvalue
with absolute value greater than one off the real line. This can be
guaranteed, though, only if the determinant of the transfer matrices
is equal to one (Proposition~\ref{lbound} of the Appendix). However,
in our case, $\det \mP_n = a_{nq}/a_{(n+1)q}\neq 1$ because of the
perturbation, and we ``fix'' the situation by a renormalization of
the transfer matrices.

The semi-continuity of the entropy (Lemma~\ref{entropy}) finishes
the proof.

\section{Truncated perturbations}
\label{sec:TruncatedPertrubations} In this section we obtain an
explicit formula connecting $\mu^N_\ac$, the a.c.\  part of the
spectral measure of $\J ^N$, to its Jacobi parameters.

Denote $\Delta(E) = \tr \mP\p(E)$.
Note that:
\begin{itemize}
    \item $\Delta(E)$ is a polynomial of degree $q$ with real coefficients.
    \item Since $\det \mP_n(\zeta) = \frac{a_{nq}}{a_{(n+1)q}}$, for the periodic case $\det \mP\p(\zeta) = 1$.
\end{itemize}

Consider the set $S = \{ E\in\R \big| |\Delta(E)| < 2,
\Delta'(E)\neq 0, C(E)\neq 0 \}$, where $C(E)$ is defined by (\ref{tr_periodic}).
For a closed interval $I\subset S$ denote
$I_{\epsilon_I} = \sett{E+i\epsilon}{E\in I, 0 <
\epsilon\leq\epsilon_I}$. We choose a collection $\I$ of closed
intervals $I\subset S$ and $\epsilon_I$  small enough for each $I$,
such that $S = \bigcup_{I\in\I}I$ and for every $I\in\I$ the following conditions hold true:
\begin{itemize}
    \item $\Delta$ is invertible on $I_{\epsilon_I}\cup I$,
    \item $C(\zeta)\neq0$ for any $\zeta\in I_{\epsilon_I}$,
    \item For any $\zeta\in I_{\epsilon_I}$, the matrix $\mP\p(\zeta)$ has an eigenvalue
        \[ z(\zeta) = \frac{\Delta(\zeta) + \sqrt{\Delta^2(\zeta) - 4}}{2}, \]
        such that  $|z(\zeta)| < 1$ as ensured by Proposition \ref{lbound} from the Appendix.
        We continue $z(\zeta)$ to $I$ as well; $z(\zeta)$ is
        analytic in $I \cup I_{\epsilon_I}$.
\end{itemize}
We choose the collection $\I$ so that $\sum_{I\in\I} |I| < \infty$, which is possible, since $\spec{\J \p}$ is a union of $q$ closed intervals and the previous conditions do not hold on a finite number of points only.
    Then, to prove Theorem \ref{mainTheorem}, it will be sufficient to prove that for any $I\in
\I$ the a.c.\ spectrum of $\J $ fills $I$.

Therefore, we fix $I\in\I$ and prove that there exists a uniform bound $C$, so that for any $N\in\N$,
\[
\int_I\ln\frac{d\mu^N_{\ac}}{dE}(E)\; dE \geq - C.
\]

Let $\vect {x(\zeta)}{y(\zeta)}$ be an eigenvector of $\mP\p(\zeta)$ corresponding to the eigenvalue $z(\zeta)$. Recall that
\[\mP\p(E) = \matr {A(E)}{B(E)}{C(E)}{D(E)},\]
where $A,B,C,D$ are polynomials with real coefficients, and
$C(E)\neq0$ on $I$. Hence we can take
\[
x(\zeta) = z(\zeta) - D(\zeta)\;\;\mbox{ and }\;\;y(\zeta) = C(\zeta).
\]

Now we set
\begin{equation}
\label{udef}
\vect {u^N_{Nq+1}(\zeta)}{u^N_{Nq}(\zeta)} = \vect{x(\zeta)}{y(\zeta)} =
\vect{z(\zeta) -D(\zeta)}{C(\zeta)}
\end{equation}
and calculate $u^N_n(\zeta)$ for any $n$, solving the recursion (\ref{greensys}) backward and forward. In this way we have built a solution of (\ref{greensys}) which lies in $\ell^2(\{0\}\cup\N)$. Indeed, it is easy to see from (\ref{trunc}) that for $n> Nq$,
\[u^N_{n+q}(\zeta) = z(\zeta)u^N_{n}(\zeta),\]
 so $|z(\zeta)| < 1$ ensures that $u$ decays exponentially.

%
For $E\in I$ we can define $u^N(E)$ as a limit of $u^N(\zeta)$ when
$\zeta = E + i\epsilon$,\ $\epsilon\searrow0$, as follows: first
define it for $u^N_{ Nq}$ and $u^N_{ Nq+1}$ using (\ref{udef}), then
for any $n$ using the fact that $u^N_n(\cdot)$ is a linear
combination of $u^N_{ Nq}(\cdot)$ and $u^N_{ Nq+1}(\cdot)$. From
(\ref{greensys}), one can see that $u^N(E)$ and
\begin{equation}
\label{vdef}
v^N(E) \df \overline{u^N(E)}
\end{equation}
both satisfy
\[
\an{n}^Nu_{n + 1}+(\V n^N - E)u_n + \an{n-1}^Nu_{n - 1} = 0,
\;\mbox {for every $n\geq 1$}.
\]

Comparing the Wronskian (see, e.g.,  \cite[p. 20]{Teschl} for its
definition and properties) of $u^N(E)$ and $v^N(E)$ at zero and at
$Nq$, we get
\begin{equation}
\label{calc}\begin{array}{lc}
a_0\p   (u^N_0     (E)v^N_1   (E) - u^N_1     (E)v^N_0     (E))&=\\
a_{Nq}\p(u^N_{Nq-1}(E)v^N_{Nq}(E) - u^N_{Nq}  (E)v^N_{Nq-1}(E))&=\\
a_{0}\p(C(E)(\overline{z(E)}-D(E)) -(z(E)-D(E))C(E))          &=\\
a_{0}\p C(E)(\overline{z(E)}-z(E)),&
\end{array}
\end{equation}
where the first equality follows from the properties of the
Wronskian,  and the second equality is obtained by combining
(\ref{udef}) and (\ref{vdef}). Hence 
\begin{equation}\label{uz}
\im(u^N_0(E)\overline{u^N_1(E)}) = -C(E)\im(z(E)).
\end{equation}
Also, (\ref{calc}) shows that on $I$ the value of $u^N_0(E)$  is not zero:  indeed, $(z(E)-\overline{z(E)})C(E)\neq0$ on $I$, and $v^N_0(E)=\overline{u^N_0(E)}$.

Since from (\ref{Gu})
\[
\overline{u^N_0(E)}u^N_1(E) =
-\ap0|u^N_0(E)|^2G^N(1, 1; E+i0),
\]
we get from (\ref{green}) and (\ref{uz}) the key formula
\begin{equation}
\label{final}
\frac{d\mu^N_{ac}}{dE}(E) =
\frac{ |C(E)\im(z(E))|}{\pi |a_0\p| | u^N_0(E)|^2},
\end{equation}
where $u^N$ is the solution of (\ref{greensys}) defined by the condition (\ref{udef}).

Now we can see that a uniform lower bound for the integral
\[ \int_I\ln\frac{d\mu^N_{\ac}}{dE}(E)\;dE \] will
follow from a (uniform) upper bound on $\int_I\ln|u^N_0(E)|\;{d}E$,
which is the subject of the next section.

\section{The estimate on $u_0^N$}
In order to obtain an estimate on $u_0^N$ we use the following theorem from  \cite{Den}:
\begin{theorem}[{ \cite[Theorem 2.1]{Den}}]\label{denisov_weight_to_diagonal}
Let
\[
\Psi_{n} = (\mI + \mW_n)\Lambda_n\Psi_{n-1}\!,\;\;
\Psi_0 = \vect10\!,\]\[\Lambda_n
=\matr {\lambda_n}00{\lambda_{n}^{-1}}\!, \;\;
\mW_n = \matr {\alpha_n} {\beta_n} {\gamma_n} {\delta_{n}}\!,
\]
where $\lambda_n \in\C$ and, for some constants $C$, $\kappa$ and $v\in[0,1)$,
 \[C > |\lambda_n| > |\kappa|> 1,\;
\left\|\left\{\left\|\mW_n\right\|\right\}\right\|_2 ( \,\, =
\sqrt{\sum \|W_n\|^2} \,\,) \leq \widetilde{C}, \mbox { and }\]
\begin{equation}\label{v_alpha}
\left|\ln{\prod_{n = k}^l |1 +\alpha_n|}\right| \leq C+v\sqrt{l - k},\;
\left|\ln{\prod_{n = k}^l |1 +\delta_n|}\right| \leq C+v\sqrt{l - k}
\end{equation}
\oo{}where $\widetilde{C} > 0$ is a universal constant.\cc  Then
 \[\Psi_{n} = \left(\prod_{j = 1}^{n }\lambda_j(1 + \alpha_j)
\right)\vect {\phi_n}{\nu_n}, \]
where $\phi_n, \nu_n$ satisfy the following estimate, uniformly in $n$ with some
constant $B>0$ \oo{}depending only on $C$\cc:
\begin{equation}\label{estim1}
|\phi_n|, |\nu_n| \leq B
\exp\left (\frac{B}{|\kappa| - 1}
    \exp\left(\frac{Bv^2}{|\kappa| - 1}
    \right)\right).
\end{equation}
Moreover,
\begin{equation}\label{estim2}
|\phi_n| > 1/\widetilde{B}(\kappa),\;|\nu_n| < \widetilde{B}(\kappa) \left\|\left\{\left\|\mW_n\right\|\right\}\right\|_2
\end{equation}
uniformly in $n$, where $\widetilde{B}(\kappa) > 0$ is a constant depending on $\kappa$ \oo{}and tending
to infinity as $|\kappa|$ approaches 1.\cc
\end{theorem}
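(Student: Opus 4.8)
The plan is to reduce the matrix recursion to a scalar lower-triangular system for the renormalized components $\phi_n,\nu_n$, to control an explicit convolution kernel governing the contracting direction, and then to run a discrete Gronwall argument for the upper bound. Writing $\Psi_n=(p_n,q_n)^\top$ and reading off the two rows of $\Psi_n=(\mI+\mW_n)\Lambda_n\Psi_{n-1}$ gives
\[
p_n=(1+\alpha_n)\lambda_n p_{n-1}+\beta_n\lambda_n^{-1}q_{n-1},\qquad q_n=\gamma_n\lambda_n p_{n-1}+(1+\delta_n)\lambda_n^{-1}q_{n-1}.
\]
Dividing by $P_n:=\prod_{j=1}^n\lambda_j(1+\alpha_j)$ and setting $\phi_n=p_n/P_n,\ \nu_n=q_n/P_n$ (so $\phi_0=1,\ \nu_0=0$) decouples the expanding direction from the contracting one:
\begin{equation*}
\phi_n=\phi_{n-1}+c_n\,\nu_{n-1},\qquad \nu_n=d_n\,\phi_{n-1}+r_n\,\nu_{n-1},
\end{equation*}
where $c_n=\tfrac{\beta_n}{\lambda_n^2(1+\alpha_n)}$, $d_n=\tfrac{\gamma_n}{1+\alpha_n}$, $r_n=\tfrac{1+\delta_n}{\lambda_n^2(1+\alpha_n)}$. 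The single-term instance $k=l$ of \eqref{v_alpha} gives $e^{-C}\le|1+\alpha_n|\le e^C$, so the factors $1/(1+\alpha_n)$ are harmless and $\{c_n\},\{d_n\}$ are $\ell^2$ with $\|c\|_2\le e^C\widetilde C/|\kappa|^2$, $\|d\|_2\le e^C\widetilde C$.

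Next I would solve the $\nu$-recursion by variation of parameters: since $\nu_0=0$,
\[
\nu_n=\sum_{j=1}^n\Big(\prod_{i=j+1}^n r_i\Big)d_j\,\phi_{j-1},
\]
and the crucial step is the kernel estimate. Using $|\lambda_i|\ge|\kappa|$ together with \eqref{v_alpha} applied to the $\delta$- and $\alpha$-products,
\[
\Big|\prod_{i=j+1}^n r_i\Big|\le e^{2C}\,e^{2v\sqrt{n-j}}\,|\kappa|^{-2(n-j)}=:K_{n-j}.
\]
Since $|\kappa|>1$, the geometric factor $|\kappa|^{-2m}$ dominates $e^{2v\sqrt m}$, so $\sum_m K_m<\infty$; completing the square in $2v\sqrt m-2m\ln|\kappa|$ gives $\|K\|_1\le\mathrm{const}(C)\,(|\kappa|-1)^{-1}\exp\!\big(\mathrm{const}\,v^2/(|\kappa|-1)\big)$, which is precisely the quantity producing the double exponential in \eqref{estim1}.

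The upper bound \eqref{estim1} now follows from Gronwall's inequality. Substituting the formula for $\nu_{j-1}$ into the $\phi$-recursion and exchanging the order of summation yields $\phi_n=1+\sum_{i<n}g_i\,\phi_{i-1}$ with $g_i=d_i\sum_{j>i}c_j K_{j-1-i}$. The inner sum is a cross-correlation of the $\ell^2$ sequence $c$ with the $\ell^1$ kernel $K$, hence lies in $\ell^2$ with norm $\le\|K\|_1\|c\|_2$; multiplying by the $\ell^2$ sequence $d$ and applying Cauchy--Schwarz gives $\sum_i|g_i|=:G\le\|c\|_2\|d\|_2\|K\|_1\le e^{2C}\widetilde C^2\|K\|_1/|\kappa|^2<\infty$. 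Discrete Gronwall then gives $\sup_n|\phi_n|\le\prod_i(1+|g_i|)\le e^{G}$, and feeding this back, $\sup_n|\nu_n|\le e^{G}\|K\|_2\|d\|_2$. Both bounds have the form of \eqref{estim1} once $\|K\|_1,\|K\|_2$ are estimated as above; note that the universal constant $\widetilde C$ is used only to bound $G$, and no smallness is required for these upper bounds.

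Finally, for \eqref{estim2} the estimate $|\nu_n|<\widetilde B(\kappa)\|W\|_2$ is immediate from $|\nu_n|\le\sup|\phi|\cdot\|K\|_2\|d\|_2$ and $\|\{\gamma_i\}\|_2\le\|W\|_2$, with $\widetilde B(\kappa)=e^{G}\|K\|_2 e^C$ blowing up as $|\kappa|\downarrow1$ through $\|K\|_2$. The lower bound on $\phi_n$ I would extract from $|\phi_n-1|=\big|\sum_{j\le n}c_j\nu_{j-1}\big|\le\|c\|_2\|\nu\|_2\le\|K\|_1 e^{G}\|c\|_2\|d\|_2$, which is $O(\|W\|_2^2)$ with a $\kappa$-dependent constant; once this is below $1$ one concludes $|\phi_n|\ge 1-O(\|W\|_2^2)>1/\widetilde B(\kappa)$. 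I expect this last point to be the main obstacle: the kernel-plus-Gronwall machinery for the upper bounds is robust and needs no smallness, but making the lower bound on $|\phi_n|$ effective forces one to balance the $O(\|W\|_2^2)$ drift of $\phi_n$ against the explosion of $\|K\|_1$ (equivalently of $G$ and $\widetilde B(\kappa)$) as $|\kappa|\downarrow1$, and it is here that the precise value of the universal constant $\widetilde C$ and the sharp form of the kernel sum genuinely enter. A secondary technical point is the careful completion-of-the-square bound for $\|K\|_1,\|K\|_2$ that reproduces the exact double exponential in \eqref{estim1}.
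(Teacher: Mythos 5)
You should first be aware of a structural point: this paper contains \emph{no proof} of Theorem~\ref{denisov_weight_to_diagonal} --- it is quoted from Denisov \cite{Den} (his Theorem 2.1) and used as a black box in Section 4, so there is no internal argument to compare yours against, and your reconstruction must be judged on its own merits. On those merits, everything leading to \eqref{estim1} is correct: the reduction to the scalar recursions $\phi_n=\phi_{n-1}+c_n\nu_{n-1}$, $\nu_n=d_n\phi_{n-1}+r_n\nu_{n-1}$, the two-sided bound $e^{-C}\le|1+\alpha_n|\le e^{C}$ from the $k=l$ case of \eqref{v_alpha}, the kernel bound $\|K\|_1\le \mathrm{const}(C)\,(|\kappa|-1)^{-1}\exp\bigl(\mathrm{const}\,v^2/(|\kappa|-1)\bigr)$ by completing the square, Young's inequality for the correlation, Cauchy--Schwarz, and the discrete Gronwall step all go through, reproduce exactly the double exponential of \eqref{estim1}, and require no smallness of $\widetilde C$. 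The bound $|\nu_n|<\widetilde B(\kappa)\left\|\left\{\left\|W_n\right\|\right\}\right\|_2$ in \eqref{estim2} also follows as you say.

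The gap you flag in the lower bound $|\phi_n|>1/\widetilde B(\kappa)$ is genuine, but its nature is worth making precise: it cannot be closed with a $\kappa$-independent smallness constant, because the statement as quoted is then false. Concretely, take $\alpha_n=\delta_n=0$, $\lambda_n=\kappa$ with $\kappa^{-2}=1-\epsilon^2$, and $\beta_n=\epsilon\kappa^2$, $\gamma_n=-\epsilon$ for $1\le n\le L\approx \pi/(2\epsilon)$. Then $(\phi_n,\nu_n)^{T}$ evolves under the unimodular matrix $\matr{1}{\epsilon}{-\epsilon}{1-\epsilon^2}$, a conjugated rotation by angle $\approx\epsilon$, so after $\approx\pi/(2\epsilon)$ steps $\phi_n$ has been driven from $1$ down to size $O(\epsilon)$, and adjusting a single $\beta_{n_0}$ by $O(\epsilon)$ makes some $\phi_{n_0}$ exactly $0$; meanwhile $\left\|\left\{\left\|W_n\right\|\right\}\right\|_2\sim\sqrt{\epsilon}$ is arbitrarily small and all hypotheses hold with $v=0$. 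This violates $|\phi_n|>1/\widetilde B(\kappa)$ for \emph{any} choice of the function $\widetilde B$. So any correct proof must, as yours does, assume $\left\|\left\{\left\|W_n\right\|\right\}\right\|_2$ small \emph{depending on} $\kappa$, with threshold degenerating as $|\kappa|\searrow 1$; the balancing you describe between the $O(\|W\|_2^2)$ drift and the blow-up of $\|K\|_1$ is intrinsic, not a removable technicality, and your estimate $|\phi_n-1|\le e^{G}\|K\|_1\|c\|_2\|d\|_2$ closes it exactly in that regime. Note that this is also precisely how the paper uses the theorem: the lower bound \eqref{phinubound} is invoked only for $\im\zeta\ge\epsilon_I/2$, where \eqref{lambdan} keeps $\kappa\ge 1+\tfrac34 C_I\epsilon_I$ bounded away from $1$, and the authors first shrink $\left\|\left\{\left\|W_n\right\|\right\}\right\|_2$ by a finitely supported perturbation. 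With the smallness hypothesis restated as $\kappa$-dependent, your proposal is a complete proof of everything the paper actually needs.
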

We would like to use Theorem \ref{denisov_weight_to_diagonal} with $\Lambda_n$  being
the diagonalization of $\mP_n$. However, $\det \mP_n = a_{nq}/a_{(n+1)q}\neq 1$
because of the perturbation, and we need to ``fix'' the situation. Define,
in the notations of (\ref{tr_period}),
\[M_n = \matr {1}00{a_{nq}}\!,\;\widetilde{\mP}_n = M_n^{-1}\mP_n M_{n + 1}=
\matr{A_n(E)}{a_{(n+1)q}B_n(E)}{a_{nq}\inv C_n(E)}
{\frac{a_{(n+1)q}}{a_{nq}}D_n(E)},
\]
so that $ \det\widetilde{\mP}_n = 1$. Then
\[
\begin{array}{l}
\vect{u_{1}^N(\zeta)}{u_{0}^N(\zeta)} =
    \mP\inv_{0}\mP\inv_{1}\cdots\mP\inv_{N-1}
    \vect{u_{Nq+1}^N(\zeta)}{u_{Nq}^N(\zeta)}
=\\
    M_0\inv M_0 \mP\inv_{0}M_1\inv M_1\cdots M_{N-1}\inv M_{N-1}\mP\inv_{N-1}M_N\inv M_N
    \vect{x(\zeta)}{y(\zeta)}
=\\
    M_0\inv \widetilde{\mP}\inv_{0}\widetilde{\mP}\inv_{1}\cdots\widetilde{\mP}\inv_{N-1}
     M_N
    \vect{x(\zeta)}{y(\zeta)}
=\\M_0\inv
\mU\inv_{0}\mL_{0}\mU_{0}\cdots
    \mU_{N-2}\mU\inv_{N-1}\mL_{N-1}\mU_{N-1}M_N\vect {x(\zeta)}{y(\zeta)},
\end{array}
\]
where
\[
\mU\inv_n(\zeta)\! =\! \matr {\lambda_n(\zeta)\!-\!\frac{a_{nq}}{a_{(n+1)q}}D_n(\zeta)}
{\lambda_n\inv(\zeta)\!-\!\frac{a_{nq}}{a_{(n+1)q}}D_n(\zeta)}{a_{nq} C_n(\zeta)}{a_{nq} C_n(\zeta)}\!.
\]
Recall that, from (\ref{trunc}), we have $a_{(N - 1)q} = a_{Nq} = \ap0$, thence
\[
\mU\inv_{N - 1}(\zeta)\! =\! \matr {z(\zeta)\!-\!D(\zeta)}
{z\inv(\zeta)\!-D(\zeta)}{a_{Nq} C(\zeta)}{a_{Nq} C(\zeta)}\!,
\]
and it is easy to check that $U_{N-1}M_N\vect {x(\zeta)}{y(\zeta)} = \vect{1}{0}$.

Denote $U_{n-1}U_n\inv = \mI + W_n(\zeta)$.
Then
\begin{equation}
\vect{u_{1}^N(\zeta)}{u_{0}^N(\zeta)} =M_0\inv
\mU\inv_{0}\mL_{0}(\mI+\mW_{1})\mL_{1}\cdots(\mI+\mW_{N-1}) \mL_{N-1}\!\vect10\! .
\label{u01}
\end{equation}

Let us denote $\xi_n = (a_{nq}, \ldots, a_{(n+1)q}, \V{nq+1},
\ldots, \V{(n+1)q})\in \R^{2q+1}$. Then $A_n(\zeta)$, $B_n(\zeta)$,
$C_n(\zeta)$ and $D_n(\zeta)$ are polynomials with coefficients
which are analytic functions of $\xi_n$ (recall that $\inf_n a_n
>0$). Hence, by the Mean Value Theorem, the norms of $\mW_{n}$ are
square-summable (for any $\zeta$) and
$\left\|\left\{\left\|\mW_n(\zeta)\right\|\right\}\right\|_2 \leq C
< \infty$, uniformly on $I_{\epsilon_I}\cup I$. By means of a
finitely supported perturbation we can make
$\left\|\left\{\left\|\mW_n(\zeta)\right\|\right\}\right\|_2$ as
small as needed without changing the a.c.\ spectrum of $\J $.

By Proposition \ref{lbound} from the Appendix
\begin{equation}
\label{lambdan} |\lambda_n(\zeta)| > 1+C_I\im\omega \quad \text{on
$I_{\epsilon_I}$}
\end{equation}
for a suitably chosen positive constant $C_I$. We also have for
$\zeta\in I_{\epsilon_I}\cup I$ and some constant $B$
\begin{equation}\label{estim_for_diagonal}\begin{split}
\left|\ln{\prod_{n = k}^l |1 + \alpha_n|}\right|& \leq
B + B\im \zeta\sqrt{l - k},\\
\left|\ln{\prod_{n = k}^l |1 + \delta_n|}\right|& \leq
B+B\im\omega\sqrt{l - k},
\end{split}
\end{equation}
which is proven following the argument in  \cite[Theorem 2.2]{Den}.

Applying Theorem \ref{denisov_weight_to_diagonal} to (\ref{u01}), (\ref{lambdan})
and (\ref{estim_for_diagonal}), we obtain
\[
\vect{u_{1}^N(\zeta)}{u_{0}^N(\zeta)} =\left(\prod_{j = 1}^{N }\lambda_j(1 + \alpha_j) \right)M_0\inv\mU\inv_{0}\mL_{0}\vect {\phi_N}{\nu_N},
\]
that is,
\begin{equation}
\label{umatr}
 u_{0}^N(\zeta) = \prod_{j = 1}^{N }\lambda_j(\zeta)(1 + \alpha_j(\zeta)) \;
    C_0(\zeta)\!\left(\lambda_0(\zeta)\phi_N(\zeta)+ \lambda_0\inv\nu_N(\zeta)\right),
\end{equation}
where
\begin{equation}
\label{estim3}
\begin{split}
|\phi_N|, |\nu_N| &\leq B
\exp\left (\frac{B}{\im \zeta}
    \exp\left(\frac{B(\im \zeta)^2}{\im \zeta}
    \right)\right) \\ &\leq
    B \exp\left (\frac{ B' }{\im \zeta}\right),
\end{split}
\end{equation}
and by means of a finitely supported perturbation we can ensure,
for any fixed $\epsilon>0$, that
\begin{equation}\label{phinubound}
|\phi_N| > 1/B'', \quad |\nu_N| < \epsilon, \quad \text{for} \quad \im \zeta > \frac{\epsilon_I}{2}.
\end{equation}

\section{Proof of the Main Theorem}

From Weyl's theorem we know that $\sigma_{\mathrm{ess}}(\J ) = \sigma_{\mathrm{ess}}(\J \p)$.
We show that the a.c.\ spectrum of $\J $ fills $\spec{\J \p}$ by proving that for the chosen
above (arbitrary) interval $I$:
\[
\int_I{\ln \frac{d\mu_{\ac}}{dE}(E)\; dE} >-\infty.
\]

From (\ref{final}),
\[ \ln \frac{d \mu_\mathrm{ac}^N}{d E}(E)
    = \ln \frac{|C(E)| |\im z(E)|}{\pi |a_0\p| |u_0^N(E)|^2}
    = \ln \frac{|C(E)| |\im z(E)|}{\pi |a_0\p|} - 2 \ln |u_0^N(E)|. \]
Now, (\ref{umatr}) yields
\[\begin{split}
 - \ln |u_0^N(\zeta)|
    &= \\ - \ln &\left|  \prod_{j = 1}^{N }\lambda_j(\zeta)(1 + \alpha_j(\zeta)) \;
        C_0(\zeta)\!\left(\lambda_0(\zeta)\phi_N(\zeta)+ \lambda_0\inv\nu_N(\zeta)\right) \right|= \\
       - \ln& \left|  \prod_{j = 1}^{N }\lambda_j(\zeta)(1 + \alpha_j(\zeta)) \right|
        - \ln \left| C_0(\zeta)
            \!\left(\lambda_0(\zeta)\phi_N(\zeta)+ \lambda_0\inv\nu_N(\zeta)\right) \right|.
\end{split}\]
Denote
\[ f_N(\zeta) = - \ln \left| C_0(\zeta)
            \!\left(\lambda_0(\zeta)\phi_N(\zeta)+ \lambda_0\inv\nu_N(\zeta)\right) \right|;\]
let us show that $f_N$ satisfies the assumptions of Lemma~\ref{denisov_harmonic_lemma}. Obviously,
$f_N$ is harmonic in $I_{\epsilon_I}$ and continuous in $I_{\epsilon_I} \cup I$. Then,
$I_{\epsilon_I} \cup I$ is a compact set, therefore
\[ |C_0(\zeta)| \max(|\lambda_0(\zeta)|, |\lambda_0^{-1}(\zeta)|) \leq B_1, \]
and by (\ref{estim3})
\[\begin{split}
&\left| C_0(\zeta) \!\left(\lambda_0(\zeta)\phi_N(\zeta)+ \lambda_0\inv\nu_N(\zeta)\right) \right| \\
&\qquad\leq B_1 (|\phi_N(\zeta)| + |\nu_N(\zeta)|)
\leq B_2 \exp \left( \frac{B'}{\im \zeta} \right)
    < \exp \left( \frac{B_3}{\im \zeta} \right).
\end{split}\]
Hence
\[ f_N(\zeta) > - \frac{B_3}{\im \zeta} \]
for all $\zeta \in I_{\epsilon_I}$, and the second condition of Lemma~\ref{denisov_harmonic_lemma}
is satisfied.

For
$ \zeta \in \Omega_I = \Big\{ \frac{3}{4}  \epsilon_{I} \leq \im \zeta \leq \epsilon_{I},
    \re \zeta \in I \Big\} $,
\begin{equation}\label{lbfn}\begin{split}
&\left| C_0(\zeta) \!\left(\lambda_0(\zeta)\phi_N(\zeta)+ \lambda_0\inv\nu_N(\zeta)\right) \right| \\
&\qquad\geq \frac{1}{B_4}
    \left[ |\lambda_0(\zeta)| |\phi_N(\zeta)| - |\lambda_0^{-1}(\zeta)||\nu_N(\zeta)| \right].
\end{split}\end{equation}
According to Proposition~\ref{lbound} in the Appendix,
\[ |\lambda_0(\zeta)| \geq 1 + C_I \im \zeta, \quad |\lambda_0^{-1}(\zeta)| \leq 1 - C_I \im \zeta,\]
and in particular
\[ |\lambda_0| \geq \kappa > 1 \quad \text{on} \quad \Omega_I \]
for $\kappa = 1 + \frac{3}{4} C_I \epsilon_I$.
By (\ref{phinubound}),
\[ (\ref{lbfn}) \geq \frac{1}{B_4} \left(\frac{1}{B''} - \epsilon\right) \geq \frac{1}{B_5} > 0 \]
if we choose $\epsilon = \frac{1}{2B''}$. This validates the third
condition of Lemma \ref{denisov_harmonic_lemma}.

On the real line,
\[ f_N(E) = \ln \frac{d\mu^N_\textrm{ac}}{dE}(E) +
    \ln  \left|  \prod_{j = 1}^{N }\lambda_j(E)(1 + \alpha_j(E)) \right|
    - \ln \frac{|C(E)| |\im z(E)|}{\pi |a_0\p|}. \]
By (\ref{estim_for_diagonal}) the second addend is uniformly bounded on $I$; the third
addend is also uniformly bounded, since $I$ is compact. Thus
\[ f_N(E) \leq \ln \frac{d\mu^N_\textrm{ac}}{dE}(E) + B_6, \]
and hence
\[ f_N^+(E) \leq \left( \ln \frac{d\mu^N_\textrm{ac}}{dE}(E) \right)^+ + B_6 \leq
     \frac{d\mu^N_\textrm{ac}}{dE}(E) + B_6.\]
Integrating over $I$, we obtain:
\[ \int_I f_N^+(E) dE \leq B_7, \]
and the first condition of Lemma \ref{denisov_harmonic_lemma} is
also fulfilled. Therefore,
\[ \int_I f_N^-(E) dE \leq B_8, \]
and
\[ \int_I \ln \frac{d\mu^N_\textrm{ac}}{dE}(E) dE
    \geq  \int_I f_N(E) dE - B_9
    \geq  -\int_I f_N^-(E) dE - B_9 \geq  -B_{10}, \]
uniformly in $N$.
Since, as $N\rightarrow\infty$, the measures $\mu^N$ converge weakly to the spectral measure $\mu$ for $\J $, by Lemma \ref{entropy},
\[ \int_I\ln\frac{d\mu_{ac}}{dE}(E) dE \geq - B_{10} > - \infty. \]
This finishes the proof.

\appendix
\section{}
\begin{prop}
\label{lbound}
Suppose $\mP: \C \longrightarrow SL_2(\C)$ is analytic and $\Delta(\zeta) = \tr \mP(\zeta)$.  Let $I\subset\R$ be a closed interval so that $|\Delta(x)|< 2$ and $\Delta'(x) \neq 0$ for any $x\in I$. Then there exist positive constants $\epsilon_I$ and $C_I$, so that for any $\zeta \in
I_{\epsilon_I} \df
\sett{x+iy}{x\in I, 0<y\leq\epsilon_I}$
the matrix $\mP(\zeta)$ has two different eigenvalues $z^{\pm1}(\zeta)$,  where  $|z(\zeta)| < |z\inv(\zeta)|$ and
\begin{equation}
\label{cI}
|z(\zeta)| < 1 - C_I\im\omega, \;|z\inv(\zeta)| > 1 + C_I\im\omega.
\end{equation}
Moreover, let $\{\mP_n(\zeta)\in SL_2(\C)\}$ be a sequence of analytic matrices converging to $\mP$ for any $\zeta\in I_\epsilon\cup I$. Then the constant $C_I$ can be chosen so that the two eigenvalues of $\mP_n(\zeta)$ satisfy (\ref{cI}) for any $n\geq n_0$ for some $n_0$.
\end{prop}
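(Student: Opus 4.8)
The plan is to reduce everything to the elementary fact that, as $\zeta$ leaves the real axis transversally, the two eigenvalues of $\mP(\zeta)$ leave the unit circle at a definite, nonzero rate. First I would record that the eigenvalues $z^{\pm1}(\zeta)$ are the roots of $z^2 - \Delta(\zeta) z + 1 = 0$ (using $\det\mP\equiv 1$), so that $z(\zeta) + z\inv(\zeta) = \Delta(\zeta)$ and $z(\zeta)z\inv(\zeta) = 1$. Since $\mP(x)$ is a real matrix for real $x$ (here it is a product of real transfer matrices), $\Delta(x)$ is real on $I$, and $|\Delta(x)| < 2$ forces the two roots to form a complex-conjugate pair on the unit circle; write $z(x) = e^{i\theta(x)}$, with $\Delta(x) = 2\cos\theta(x)$ and $\sin\theta(x)\neq 0$. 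Because $\Delta^2 - 4$ is continuous and bounded away from $0$ on the compact set $I$, it stays away from $0$ on a complex neighborhood of $I$, so $\sqrt{\Delta^2 - 4}$ admits an analytic branch there; consequently $z(\zeta)$ is analytic and $z\neq z\inv$.

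Next I would compute the rate at which $|z|$ departs from $1$. Differentiating $z^2 - \Delta z + 1 = 0$ gives $z'/z = \Delta' z/(z^2 - 1)$; on the unit circle $z^2 - 1 = 2i e^{i\theta}\sin\theta$, so at points of $I$ we get $z'/z = \Delta'/(2i\sin\theta)$. By the Cauchy--Riemann equations, for $\zeta = x + iy$,
\[
\frac{\partial}{\partial y}\log|z| = -\im\frac{z'}{z} = \frac{\Delta'(x)}{2\sin\theta(x)} \quad \text{at } y = 0.
\]
This is nonzero because $\Delta'(x)\neq 0$ and $\sin\theta(x)\neq 0$. Since both $\Delta'$ and $\sin\theta$ keep a constant sign on the connected interval $I$, I may fix the analytic branch so that this derivative is negative throughout $I$, and call the corresponding eigenvalue $z$ (the other is then $z\inv$, with the opposite, positive rate). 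On the compact interval $I$ the quantity $|\Delta'|/(2|\sin\theta|)$ is bounded below by some $c_0 > 0$.

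Then I would turn this infinitesimal statement into the claimed inequality. On a fixed compact neighborhood $\overline{I_{\epsilon_0}}$ on which $z$ is analytic and nonvanishing, $\log|z|$ is smooth with second derivatives bounded by some $M$; since $\log|z(x)| = 0$ and $\partial_y\log|z|(x,0) \leq -c_0$, Taylor's theorem gives $\log|z(x+iy)| \leq -c_0 y + \tfrac{M}{2}y^2 \leq -\tfrac{c_0}{2}y$ once $y \leq \epsilon_I := \min(\epsilon_0, c_0/M)$. Exponentiating, and using $z z\inv = 1$, produces a positive constant $C_I$ with $|z(\zeta)| \leq 1 - C_I\im\zeta$ and $|z\inv(\zeta)| \geq 1 + C_I\im\zeta$ on $I_{\epsilon_I}$, which is (\ref{cI}).

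Finally, for the perturbed matrices $\mP_n$ I would run the same argument with all constants chosen uniformly in $n$. The $\mP_n$ are analytic with $\tr\mP_n(x)$ real on $I$, and the convergence $\mP_n\to\mP$ is locally uniform near $I$ (in our setting the $\mP_n$ are uniformly bounded on compacta, so Vitali's theorem upgrades the pointwise convergence); hence by the Cauchy estimates $\Delta_n$, $\Delta_n'$, $\Delta_n''$ converge uniformly to $\Delta$, $\Delta'$, $\Delta''$ on $I$ and on a neighborhood. For $n$ large this keeps $\sup_I|\Delta_n| < 2$, keeps $\inf_I|\Delta_n'|$ bounded away from $0$ with the sign of $\Delta'$, and bounds the second derivatives uniformly, so the constants $c_0$, $M$, $\epsilon_0$ above may be taken independent of $n$; this yields (\ref{cI}) for all $\mP_n$ with $n\geq n_0$ and a common $C_I$, $\epsilon_I$. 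I expect this last uniformity to be the main obstacle: because the bound $1 - C_I\im\zeta$ degenerates to $1$ as $\zeta$ approaches the real axis, it cannot be obtained from a compactness argument away from $\R$, and one must instead propagate the transversality estimate uniformly in $n$ down to the boundary, which is precisely what forces the use of uniform (rather than merely pointwise) convergence of $\Delta_n$ and its first two derivatives.
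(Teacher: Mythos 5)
Your proof is correct and follows essentially the same route as the paper's: both show $\ln|z|=0$ on $I$, compute the normal derivative $\partial_y \ln|z| = -\im (z'/z)$ at the axis (the paper via the explicit formula $z' = z\Delta'/\sqrt{\Delta^2-4}$, you via implicit differentiation of $z^2-\Delta z+1=0$ and the parametrization $z=e^{i\theta}$ --- the same quantity), observe it is nonvanishing of constant sign and hence bounded away from zero on the compact interval $I$, and convert this into (\ref{cI}) by a Taylor expansion in $y$ with a uniform second-order remainder. The one place you go beyond the paper is the perturbation statement: the paper simply invokes Vitali's theorem to get uniform convergence $\lambda_n \to z$ on $I_{\epsilon_I}\cup I$ and declares the conclusion, whereas you correctly note that uniform convergence of the eigenvalues alone cannot produce a bound of the form $1+C_I\im\zeta$, since that bound degenerates as $\zeta$ approaches $\R$ while the approximation error is a constant; your fix --- upgrading to locally uniform convergence near $I$ and using Cauchy estimates to get uniform convergence of $\Delta_n'$ and $\Delta_n''$, then rerunning the Taylor argument with constants independent of $n$ --- supplies the detail the paper's last sentence glosses over. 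One minor caveat, shared with the paper's own proof: both arguments implicitly use that $\Delta$ (and $\Delta_n$) are real on $I$, so that the eigenvalues lie on the unit circle there; this holds in the application (real transfer matrices) but is not literally part of the proposition's hypotheses, and you were right to flag it explicitly.
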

\begin{proof}
Let
\[
z(\zeta) = \frac{\ddo+\sqrt{\ddo^2 -4}}2
\]
be one of the eigenvalues of $\mP(\zeta)$. Define
$f(x, y) = \ln |z(x+iy)|$.

Since $\mP\in SL_2(\C)$ and $|\ddo| < 2$, we have $f(x, 0) = 0$ for any $x\in I$. Below we prove that $g(x)\df\frac{\partial{f}}{\partial{y}}(x, 0)\neq 0$ for any $x\in I$. The function $g(x)$ is continuous on $I$ and either positive or negative there. Suppose $g(x) > 0$ on $I$. The case of $g(x) < 0$ can be treated similarly.
From the Taylor expansion we get
\[
f(x, y) = f(x, 0) + g(x)y + O(y^2) = g(x)y + O(y^2).
\]
 Since $I$ is compact, there exists $\min_{x\in I}g(x) = 2C_I$. Therefore, there exists $\epsilon_I$ so that for every $0<y \leq \epsilon_I$
\[
f(x, y) =\ln|z(x + iy)| = g(x)y + O(y^2) > C_I y.
\]
To prove that $\frac{\partial{f}}{\partial{y}}(x, 0)\neq 0$ on $I$, calculate:
\[
\frac{\partial{f}}{\partial{y}}= \re \frac{\partial{\ln(z(x + iy))}}{\partial{y}} = \re \left(\frac{i}{z(\zeta)}z'(\zeta)\right) = -\im \left(\frac{1}{z(\zeta)}z'(\zeta)\right)
\]
\[
 = -\im \left(\frac{1}{z(\zeta)}\frac{z(\zeta)\Delta'(\zeta)}{\sqrt{\ddo^2 - 4}}\right) \neq 0
\]
for every $\zeta = x\in I$.

Let $\Delta_n(\zeta) = \tr \mP_n(\zeta)$ and
\[
\lambda_n(\zeta) = \frac{\Delta_n(\zeta)+\sqrt{\Delta_n^2(\zeta) -4}}2.
\]
Note that $\{\lambda_n(\zeta)\}$ is a sequence of analytic functions converging pointwise to $z(\zeta)$ on $I_{\epsilon_I}\cup I$ and uniformly bounded on it. Hence, by Vitali's theorem (see, e.g.,  \cite{hille}), the convergence is uniform and the second statement of the Proposition holds true for our choice of $C_I$.
\end{proof}

\end{document}